\headsep \linespread{1.0} \textheight 236mm
\numberwithin{equation}{section}
\newtheorem{thm}{Theorem}[section]
\newtheorem{lem}{Lemma}[section]
\newtheorem{alg}{Algorithm}[section]
\theoremstyle{remark}
\newtheorem{rem}{Remark}[section]
\def\to{\rightarrow}
\newcommand{\q}{\quad}
\def\m{\mbox}
\def\bb{\begin{equation}} \def\ee{\end{equation}}
\def\beqn{\begin{eqnarray}}  \def\eqn{\end{eqnarray}}
\def\beqnx{\begin{eqnarray*}} \def\eqnx{\end{eqnarray*}}
\begin{document}
\begin{center}
{\Large\bf Convergence of an Adaptive Finite Element Method\\
for Distributed Flux Reconstruction}
\end{center}

\centerline{Yifeng Xu\footnote{Department of Mathematics, Scientific Computing Key Laboratory of
Shanghai Universities and E-Institute for Computational Science of Shanghai Universities, Shanghai
Normal University, Shanghai 200234, China. The research of this
author was in part supported by NSFC (11201307), E-Institute of Shanghai Universities (E03004),
Innovation Program of Shanghai Municipal Education Commission (13YZ059) and Shanghai
Normal University Research Program (SK201202).
({\tt yfxu@shnu.edu.cn})}
\quad and \quad Jun Zou\footnote{Department of
Mathematics, The Chinese University of Hong Kong, Shatin, N.T., Hong
Kong. The work of this author was supported by Hong Kong RGC grants
(Projects 405110 and 404611) and a Direct Grant for Research from Chinese University of
Hong Kong.
({\tt zou@math.cuhk.edu.hk})}}

\begin{abstract}
We shall establish the convergence of an adaptive conforming finite element
method for the reconstruction of the distributed flux in a diffusion
system. The adaptive method is based on a posteriori error
estimators for the distributed flux, state and costate variables. The sequence
of discrete solutions produced by the adaptive algorithm is proved
to converge to the true triplet satisfying the optimality conditions
in the energy norm and the corresponding error estimator converges to zero
asymptotically.
\end{abstract}

\textbf{Keywords}. Distributed flux reconstruction,
adaptive finite element method, convergence.

\textbf{MSC(2000)}.  {65N12, 65N21, 65N30}

\section{Introduction}

The heat flux distributions are of significant practical interest in
thermal and heat transfer problems, e.g., the real-time monitoring
in steel industry \cite{ali} and the visualization by liquid crystal
thermography \cite{dk}. Considering its accurate distribution is
rather difficult to obtain in some inaccessible part of the physical
domain, such as the interior boundary of nuclear reactors and steel
furnaces, engineers attempt to recover the heat flux from some
measured data, which leads naturally to the inverse problem of
reconstructing the distributed heat flux from the measurements on
the accessible part of the boundary or the Cauchy problem for an
elliptic/parabolic equation. Several numerical methods have been
proposed for this classical ill-posed problem, among which the
least-squares formulation
\cite{xiezou} \cite{zk} \cite{zl} has received
intensive investigations and has been implemented by means of the
boundary integral method \cite{zl} and the finite element method
\cite{xiezou}.

However, the story is far from complete from the viewpoint of numerical simulations.
One main challenge is to detect local features of
unknown fluxes accurately and efficiently, particularly in the
presence of non-smooth boundaries and discontinuity or singularity
in fluxes. Compared with the finite element reconstruction over meshes
generated by a uniform refinement, which often requires
formidable computational costs to achieve a high resolution, adaptive finite element
methods (AFEM) are clearly a preferable candidate to remedy the
situation as it is able to retrieve the same result with much fewer
degrees of freedom.

A standard adaptive finite element method consists of successive loops of the form:
\begin{equation}\label{afemloop}
    \mbox{\textsf{SOLVE}}\rightarrow\mbox{\textsf{ESTIMATE}}\rightarrow\mbox{\textsf{MARK}}\rightarrow\mbox{\textsf{REFINE.}}
\end{equation}
That is, one first solves the discrete problem for the finite
element solution on the current mesh, computes the related a
posteriori error estimator, marks elements to be subdivided, and
then refines the current mesh to generate a new finer one.

A major force to drive the process \eqref{afemloop} is the module
\textsf{ESTIMATE}, which relies on some computable quantities
(often called a posteriori estimation), formed by the discrete
solution on the current mesh and given data. Since the pioneer work
\cite{br}, a posteriori error estimations have been extensively
investigated for finite element approximations of direct partial
differential equations and the theory has reached a mature level for
elliptic systems; see the monographs \cite{ao} \cite{ban} \cite{ver} and the
references therein. As far as PDE-based inverse problems are concerned,
there are also some important developments, e.g.,
\cite{bangerth} \cite{beckervexler} \cite{beilinajohnson} \cite{beilina2} \cite{beilina3}
\cite{beilina4} \cite{fyl} \cite{gkv} \cite{lxz}. But a vast amount of literature is
available on PDE-constrained optimal control problems;
see \cite{bkr} \cite{hh} \cite{hhik} \cite{llmt} \cite{liuyan} and references therein,
although inverse problems are quite different in nature due to the
severe instability by data noise.

On the other hand, the study of AFEMs itself is also a research topic
of great interest and has made a substantial progress
in the past decade. Specifically, the convergence and the computational
complexity of an AFEM have been analyzed in depth for
the numerical solution of second order boundary value problems; see
\cite{bdd} \cite{ckns} \cite{cdn} \cite{dor} \cite{mor2} \cite{mor3} \cite{nsv} \cite{siebert} \cite{stev}. But there are still no
developments available for inverse problems.
To our knowledge, the only related work is the one in \cite{ghik} and
it studied the asymptotic error reduction property
of an adaptive finite element approximation
for the distributed control problems with control constraints,
where the adaptive algorithm requires one extra step for
some oscillation terms in the module \textsf{MARK} and the interior
node property in the module \textsf{REFINE}.

In this work, we shall fill in the gap
and establish a first convergence result for an adaptive finite element method
for inverse problems, namely, we shall demonstrate that
both the finite element error (in some appropriate norm) and the estimator
converge to zero when the AFEM is applied to reconstruct the distributed flux on some
inaccessible part of the boundary from partial measurements on an
accessible boundary part. Compared with \cite{ghik} for an optimal control problem,
the algorithm studied here is of the same framework as the standard one for (direct) elliptic
problems (e.g. \cite{ckns} \cite{nsv}), particularly no more marking for oscillation terms as well as no interior node property
is enforced in the module \textsf{MARK} and the module \textsf{REFINE},
therefore it is advantageous to
practical computations. Our basic arguments follow some principles
in \cite{siebert}  \cite{mor3} for a class of linear direct boundary value
problems. In this sense, the current work may be viewed as an extension of \cite{mor3} \cite{siebert} for
the AFEM to inverse problems, but due to the nature of the inverse problem there are some essential technical
differences as mentioned below.
\begin{itemize}
\item
The direct problems of some linear partial differential equations were considered in \cite{mor3} \cite{siebert},
while a nonlinear optimization problem for solving an inverse problem
with the temperature field (state) and the flux (control) coupled in a diffusion equation is the focus of this work, which leads to
a saddle-point system.

\item
In \cite{mor3} \cite{siebert} for linear direct problems, a key observation
is the strong convergence of a sequence of discrete solutions generated by the adaptive process \eqref{afemloop} to some limit,
which is a direct consequence of the standard finite element convergence theory such as the Cea's lemma \cite{ciarlet}.
In contrast, achieving such a result for the inverse problem is highly nontrivial. We shall
view the approximate fluxes generated by \eqref{afemloop} as the minimizers to a discrete optimal system,
and employ some techniques from the nonlinear optimizations to establish the strong convergence of
the adaptive sequence to a minimizer of some limiting optimal system.

\item
The convergence was established in \cite{siebert} by first demonstrating the weak vanishing limit
of a sequence of residuals associated with the adaptive solutions, then proving the strong limit
of the sequence of adaptive solutions is the exact solution.
But this approach does not apply to our current problem
as the exact state and the limiting state depend on the exact flux and the limiting flux respectively.
As a remedy, we shall introduce an auxiliary state depending on the limiting flux to help us realize
the desired convergence.
\end{itemize}

Our convergence theory are basically established in three steps. In the first step,
we shall show the sequence of discrete triplets (the approximate state, costate
and flux) produced by the adaptive algorithm converges strongly to some limiting triplet.
Unlike for the direct problem of differential equations, we need to deal with a nonlinear optimization system
with PDE constraints; see section 4. In the second step, we will prove the limiting triplet is the exact one. 
To do so, we have to consider and study
the limiting behaviors of the residuals associated with the approximate state and
costate and introduce an auxiliary problem to resolve a technical difficulty; see section 5.2.
Finally in the last step, we will demonstrate that the error estimator has a vanishing limit.
This will be the consequence of the previous steps and the efficiency of the error
estimator; see the proof of Theorem \ref{thm_convergence_estimator}.

The rest of this paper is organized as follows. In section 2, we
give a description of the flux reconstruction problem and its finite
element method. A standard adaptive algorithm based on an existing
residual-type a posteriori error estimator is stated in section 3.
In section 4, we prove the sequence of discrete triplets converges
to some limiting triplet. The main results are presented in
section 5 and finally the paper is ended with some concluding remarks in section 6.

Throughout the paper we adopt the standard notation for the Lebesgue space $L^{\infty}(G)$ and
$L^{2}$-based Sobolev spaces $H^{m}(G)$ on an open bounded domain $G\subset\mathbb{R}^{d}$. Related norms and semi-norms of $H^{m}(G)$
as well as the norm of $L^{\infty}(G)$ are denoted by $\|\cdot\|_{m,G}$, $|\cdot|_{m,G}$ and $\|\cdot\|_{\infty,G}$ respectively.
We use
$(\cdot,\cdot)_{G}$ to denote the $L^{2}$ scalar product $G$, and the subscript is omitted when
no confusion is caused. Moreover, we shall use $C$, with or without subscript, for
a generic constant independent of the mesh size and it may take a different value
at each occurrence.

\section{Mathematical formulations}
Let $\Omega\subset\mathbb{R}^{d}$ ($d=2,3$) be an open and bounded
polyhedral domain. The boundary $\Gamma$ of $\Omega$ is made up of
two disjoint parts $\Gamma_{a}$ and $\Gamma_{i}$ such that
$\Gamma=\Gamma_{a}\cup\Gamma_{i}$, where
$\Gamma_{a}$ and $\Gamma_{i}$ are the accessible
and inaccessible parts respectively.
The governing diffusion system of our interest is of the form
\begin{equation}\label{diffeq_state}
    -\boldsymbol{\nabla}\cdot(\alpha\boldsymbol{\nabla}u)=f\quad\mbox{in
}\Omega,
\end{equation}
\begin{equation}\label{bc_state}
    \alpha\dfrac{\partial u}{\partial n}+\gamma u=\gamma u_{a}\quad\mbox{on }\Gamma_{a}\,;\quad
    \alpha\dfrac{\partial u}{\partial n}=-q\quad\mbox{on }\Gamma_{i},
\end{equation}
where $\boldsymbol{n}$ is the unit outward normal on $\Gamma$ and
the given data include the source $f\in L^{2}(\Omega)$, the ambient
temperature $u_a\in L^{2}(\Gamma_{a})$, the heat transfer
coefficient $\gamma>0$ and the diffusivity coefficient $\alpha>0$.
For simplicity $\gamma$ and $\alpha$ are both assumed
to be constants, but it is straightforward to extend all our analyses and results
to the case when both are variable functions.
The inverse problem is to recover the distributed flux
$q$, when the partial measurement data $z$ of temperature $u$ is available
on $\Gamma_a$.
We note this problem is highly ill-posed since the Cauchy data $z$
imposed on $\Gamma_{a}$ is inevitably contaminated with
observation errors in practice \cite{xiezou}. To overcome
this difficulty, we often formulate it as a constrained minimization
problem with the Tikhonov regularization:
\begin{equation}\label{constrained_min_cont}
    \min_{q\in
L^{2}(\Gamma_{i})}\mathcal{J}(q)=\frac{1}{2}\|u(q)-z\|^{2}_{0,\Gamma_{a}}+\frac{\beta}{2}\|q\|^{2}_{0,\Gamma_{i}},
\end{equation}
where $u:=u(q)\in H^{1}(\Omega)$ satisfies the variational
formulation of \eqref{diffeq_state}-\eqref{bc_state}:
\begin{equation}\label{vp_state_constraint}
    a(u,\phi)=(f,\phi)+(\gamma u_{a},\phi)_{\Gamma_{a}}-(q,\phi)_{\Gamma_{i}}\quad\forall~\phi\in
H^{1}(\Omega)
\end{equation}
and the constant $\beta>0$ is the regularization parameter. Here
$a(\cdot,\cdot)=:(\alpha\boldsymbol{\nabla}\cdot,\boldsymbol{\nabla}\cdot)+(\gamma\cdot,\cdot)_{\Gamma_{a}}$
is a weighted inner product over $H^{1}(\Omega)$ and its induced
norm $\|\cdot\|_{a}$ is equivalent to the usual $H^{1}$-norm due to
the Poincar\'{e} inequality. There exists a unique minimizer to
the system \eqref{constrained_min_cont}-\eqref{vp_state_constraint}
\cite{xiezou}. Moreover, with a costate $p^{\ast}\in H^{1}(\Omega)$
involved, the minimizer $(q^{\ast},u^{\ast}(q^{\ast}))$ is
characterized by the following optimality conditions \cite{lxz}:
\begin{align}
    &a(u^{\ast},\phi)=(f,\phi)+(\gamma u_{a},\phi)_{\Gamma_{a}}-(q^{\ast},\phi)_{\Gamma_{i}}\quad\forall~\phi\in
H^{1}(\Omega)\label{vp_state}\\
    &a(p^{\ast},v)=(u^{\ast}-z,v)_{\Gamma_{a}}\quad\forall~v\in H^{1}(\Omega),\label{vp_costate}\\
    & (\beta q^{\ast}-p^{\ast},w)_{\Gamma_{i}}=0\quad\forall~w\in L^{2}(\Gamma_{i}).\label{gateaux_cont}
\end{align}

Next we introduce a finite element method to approximate the continuous
problem \eqref{constrained_min_cont}-\eqref{vp_state_constraint}.
Let $\mathcal{T}_{h}$ be a shape-regular conforming triangulation of
$\bar{\Omega}$ into a set of disjoint closed simplices,
with the diameter $h_{T}:=|T|^{1/d}$ for each $T\in\mathcal{T}_{h}$.
Let $V_{h}$ be the usual $H^{1}$-conforming linear element space over $\mathcal{T}_{h}$,
and $V_{h,\Gamma_{i}}: = V_{h}|_{\Gamma_{i}}$ be the feasible discrete space for $q$.
Then the minimization \eqref{constrained_min_cont}-\eqref{vp_state_constraint}
is approximated by
\begin{equation}\label{constrained_min_disc}
    \min_{q_{h}\in V_{h,\Gamma_{i}}}\mathcal{J}(q_{h})=\frac{1}{2}\|u_{h}(q_{h})-z\|^{2}_{0,\Gamma_{a}}+\frac{\beta}{2}\|q_{h}\|^{2}_{0,\Gamma_{i}},
\end{equation}
where $u_{h}:=u_{h}(q_{h})\in V_{h}$ solves the discrete problem
\begin{equation}\label{vp_state_constraint_disc}
    a(u_{h},\phi_{h})=(f,\phi_{h})+(\gamma u_{a},\phi_{h})_{\Gamma_{a}}-(q_{h},\phi_{h})_{\Gamma_{i}}\quad\forall~\phi_{h}\in
V_{h}.
\end{equation}
As in the continuous case, there exists a unique minimizer to
\eqref{constrained_min_disc}-\eqref{vp_state_constraint_disc},  and
the minimizer $q^{\ast}_{h}\in V_{h,\Gamma_{i}}$, the discrete state and costate $u^{\ast}_{h}\in V_h$
and $p^{\ast}_{h}\in V_h$ satisfy the optimality conditions:
\begin{align}
    &a(u_{h}^{\ast},\phi_{h})=(f,\phi_{h})+(\gamma u_{a},\phi_{h})_{\Gamma_{a}}-(q^{\ast}_{h},\phi_{h})_{\Gamma_{i}}\quad\forall~\phi_{h}\in
V_{h}\label{vp_state_disc}\\
    &a(p^{\ast}_{h},v_{h})=(u^{\ast}_{h}-z,v_{h})_{\Gamma_{a}}\quad\forall~v_{h}\in V_{h},\label{vp_costate_disc}\\
    & (\beta q^{\ast}_{h}-p^{\ast}_{h},w_{h})_{\Gamma_{i}}=0\quad\forall~w_{h}\in V_{h,\Gamma_{i}}.\label{gateaux_disc}
\end{align}

\section{A posteriori error estimation and an adaptive algorithm}

In this section we review a residual-type a posteriori error
estimate and a related adaptive algorithm developed in \cite{lxz}.
For this purpose, some more notation and definitions are needed.

The collection of all faces (resp. all interior faces) in $\mathcal{T}_{h}$
is denoted by $\mathcal{F}_{h}$ (resp. $\mathcal{F}_{h}(\Omega)$) and its restriction on $\Gamma_{a}$ and $\Gamma_{i}$ by $\mathcal{F}_{h}(\Gamma_{a})$ and $\mathcal{F}_{h}(\Gamma_{i})$ respectively. The scalar $h_{F}:=|F|^{1/(d-1)}$ stands for
the diameter of $F\in\mathcal{F}_{h}$, which is
associated with a fixed normal unit vector $\boldsymbol{n}_{F}$ in the interior of $\Omega$ and $\boldsymbol{n}_{F}=\boldsymbol{n}$ on the boundary $\Gamma$. We use $D_{T}$ (resp.\,$D_{F}$)
for the union of all elements in $\mathcal{T}_{h}$ with
non-empty intersection with element $T\in\mathcal{T}_{h}$ (resp.\,$F\in\mathcal{F}_{h}$).
Furthermore, for any $T\in\mathcal{T}_{h}$ we denote by $\omega_{T}$
the union of elements in $\mathcal{T}_{h}$ sharing a common
face with $T$, while for any $F\in\mathcal{F}_{h}(\Omega)$ (resp.
$F\in\mathcal{F}_{h}(\Gamma_{a})\cup\mathcal{F}_{h}(\Gamma_{i})$)
we denote by $\omega_{F}$ the union of two elements in
$\mathcal{T}_{h}$ sharing the common face $F$ (resp. the element
with $F$ as an common edge).

For any $(\phi_{h},v_{h},w_{h})\in V_{h}\times V_{h}\times V_{h,\Gamma_{i}}$,
we define two element residuals for each $T\in\mathcal{T}_{h}$ by
\[
    R_{T,1}(\phi_{h})=f+\boldsymbol{\nabla}\cdot(\alpha\boldsymbol{\nabla}\phi_{h})\,
    \quad \mbox{and}
    \quad R_{T,2}(v_{h})=-\boldsymbol{\nabla}\cdot(\alpha\boldsymbol{\nabla}v_{h})\;,
\]
and two face residuals  for each face $F\in\mathcal{F}_{h}$ by
\[
    J_{F,1}(\phi_{h},w_{h})=\left\{\begin{array}{lll}
                        [\alpha\boldsymbol{\nabla} \phi_{h}\cdot\boldsymbol{n}_{F}]\quad&
                        \m{for} ~~F\in\mathcal{F}_{h}(\Omega),\\
                        \gamma u_{a}-\gamma \phi_{h}-\alpha\boldsymbol{\nabla}\phi_{h}\cdot\boldsymbol{n}_{F}\quad&
                        \m{for} ~~F\in\mathcal{F}_{h}(\Gamma_{a}),\\
                        -w_{h}-\alpha\boldsymbol{\nabla} \phi_{h}\cdot\boldsymbol{n}_{F}\quad&
                         \m{for} ~~ F\in\mathcal{F}_{h}(\Gamma_{i})
                \end{array}\right.
\]
and
\[
    J_{F,2}(v_{h},\phi_{h})=\left\{\begin{array}{lll}
                        [\alpha\boldsymbol{\nabla} v_{h}\cdot\boldsymbol{n}_{F}]\quad&
                        \m{for} ~~F\in\mathcal{F}_{h}(\Omega),\\
                        \phi_{h}-z-\gamma v_{h}-\alpha\boldsymbol{\nabla} v_{h}\cdot\boldsymbol{n}_{F}\quad&
                        \m{for} ~~F\in\mathcal{F}_{h}(\Gamma_{a}),\\
                        -\alpha\boldsymbol{\nabla} v_{h}\cdot\boldsymbol{n}_{F}\quad&
                        \m{for} ~~F\in\mathcal{F}_{h}(\Gamma_{i}),
                \end{array}\right.
\]
where $[\alpha\boldsymbol{\nabla}\phi_{h}\cdot\boldsymbol{n}_{F}]$
and $[\alpha\boldsymbol{\nabla}v_{h}\cdot\boldsymbol{n}_{F}]$ are
the jumps across $F\in\mathcal{F}_{h}$. Then for any
$\mathcal{M}_{h}\subseteq\mathcal{T}_{h}$, we introduce
the error estimator
\[
    \begin{split}
        \eta_{h}^{2}(\phi_{h},v_{h},w_{h},f,u_{a},z,\mathcal{M}_{h})&:=\sum_{T\in\mathcal{M}_{h}}\eta_{T,h}^{2}(\phi_{h},v_{h},w_{h},f,u_{a},z)\\
                                                                    &:=\sum_{T\in\mathcal{M}_{h}}(\eta^{2}_{T,h,1}(\phi_{h},w_{h},f,u_{a})+\eta^{2}_{T,h,2}(v_{h},\phi_{h},z))
    \end{split}
\]
with
\[
    \eta^{2}_{T,h,1}(\phi_{h},w_{h},f,u_{a}):=h_{T}^{2}\|R_{T,1}(\phi_{h})\|_{0,T}^{2}+\sum_{F\subset\partial
T}h_{F}\|J_{F,1}(\phi_{h},w_{h})\|^{2}_{0,F}
\]
and
\[
    \eta^{2}_{T,h,2}(v_{h},\phi_{h},z):=h_{T}^{2}\|R_{T,2}(v_{h})\|_{0,T}^{2}+\sum_{F\subset\partial
T}h_{F}\|J_{F,2}(v_{h},\phi_{h})\|^{2}_{0,F}\,,
\]
and the following oscillation errors that involve the given data and
the related elementwise projections:
\[
    \mathrm{osc}_{h}^{2}(f,\mathcal{M}_{h}):=\sum_{T\in\mathcal{M}_{h}}h_{T}^{2}\|f-\bar{f}_{T}\|_{0,T}^{2},\quad
\]
\[
    \mathrm{osc}_{h}^{2}(\phi_{h},w_{h},\mathcal{S}_{h}):=\sum_{F\in\mathcal{S}_{h}}h_{F}\|J_{F1}(\phi_{h},w_{h})-\bar{J}_{F1}(\phi_{h},w_{h})\|^{2}_{0,F},
\]
\[
    \mathrm{osc}_{h}^{2}(v_{h},\phi_{h},\mathcal{S}_{h}):=\sum_{F\in\mathcal{S}_{h}}h_{F}\|J_{F2}(v_{h},\phi_{h})-\bar{J}_{F2}(v_{h},\phi_{h})\|^{2}_{0,F}
\]
for some $\mathcal{M}_{h}\subseteq\mathcal{T}_{h}$ and
$\mathcal{S}_{h}\subseteq\mathcal{F}_{h}$, where $\bar{f}_{T}$ (resp.
$\bar{J}_{F1}(\phi_{h},w_{h})$ and $\bar{J}_{F2}(v_{h},\phi_{h})$)
is the integral average of $f$ (resp. $J_{F1}(\phi_{h},w_{h})$ and
$J_{F2}(v_{h},\phi_{h} )$) over $T$ (resp. $F$). When
$\mathcal{M}_{h}=\mathcal{T}_{h}$ or
$\mathcal{S}_{h}=\mathcal{F}_{h}$, $\mathcal{M}_{h}$ or $\mathcal{S}_{h}$
will be dropped in the parameter list of the error estimator or the oscillation errors above.

With the above preparations, we are now ready to present the upper and lower bounds for
the errors of the finite element solutions in terms of a residual-type estimator
\cite{lxz}.

\begin{thm}\label{thm_reliability} Let $(u^{\ast},p^{\ast},q^{\ast})$ and
$(u^{\ast}_{h},p^{\ast}_{h},q^{\ast}_{h})$ be the solutions of
\eqref{vp_state}-\eqref{gateaux_cont} and
\eqref{vp_state_disc}-\eqref{gateaux_disc} respectively, then there
exists a constant $C$ depending on the shape-regularity
of $\mathcal{T}_{h}$ and the coefficients $\alpha$ and $\gamma$, such that
\begin{equation}\label{global_upperbound}
    \|u^{\ast}-u^{\ast}_{h}\|_{1}^{2}+\|p^{\ast}-p^{\ast}_{h}\|_{1}^{2}+\|q^{\ast}-q^{\ast}_{h}\|_{0}^{2}\leq
    C\beta^{-2}\eta_{h}^{2}(u^{\ast}_{h},p^{\ast}_{h},q^{\ast}_{h},f,u_{a},z).
\end{equation}
\end{thm}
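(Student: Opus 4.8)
The statement is a global reliability (upper) bound for the coupled optimality system, so the natural strategy is to bound each of the three error contributions $\|u^{\ast}-u^{\ast}_{h}\|_{1}$, $\|p^{\ast}-p^{\ast}_{h}\|_{1}$ and $\|q^{\ast}-q^{\ast}_{h}\|_{0}$ separately by the estimator and then combine. The crucial observation is that the three equations are coupled only in a triangular-like fashion once one controls the flux error: the state equation \eqref{vp_state} depends on $q^{\ast}$, the costate \eqref{vp_costate} depends on $u^{\ast}$, and the optimality relation \eqref{gateaux_cont} ties $q^{\ast}$ to $p^{\ast}$. So I would first establish a bound for $\|q^{\ast}-q^{\ast}_{h}\|_{0,\Gamma_{i}}$ using the two Galerkin relations \eqref{gateaux_cont} and \eqref{gateaux_disc}, which give $\beta(q^{\ast}-q^{\ast}_{h},w)_{\Gamma_i}=(p^{\ast}-p^{\ast}_{h},w)_{\Gamma_i}+(p^{\ast}_h - P_h p^\ast_h, \dots)$ type identities; testing with $w=q^{\ast}-q^{\ast}_{h}$ and using the trace inequality on $\Gamma_i$ one reduces the flux error to (a trace of) the costate error plus data-approximation terms, so morally $\beta\|q^{\ast}-q^{\ast}_{h}\|_{0,\Gamma_i}\lesssim \|p^{\ast}-p^{\ast}_{h}\|_{1}+\text{(osc)}$.

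Next I would bound the state error. Write $u^{\ast}-u^{\ast}_{h}=(u^{\ast}-u^{\ast}_h(q^{\ast}_h)) $ where one may either split off an auxiliary state $u_h(q^{\ast}_h)$ solving the continuous state equation with discrete flux, or argue directly. Using the $a$-coercivity, $\|u^{\ast}-u^{\ast}_{h}\|_{a}^{2}=a(u^{\ast}-u^{\ast}_{h},u^{\ast}-u^{\ast}_{h})$, I would subtract a finite-element interpolant (e.g. Clément/Scott–Zhang) $I_h(u^{\ast}-u^{\ast}_{h})\in V_h$ using Galerkin orthogonality from \eqref{vp_state}--\eqref{vp_state_disc}, integrate by parts elementwise, and recognize the element and face residuals $R_{T,1}(u^{\ast}_h)$, $J_{F,1}(u^{\ast}_h,q^{\ast}_h)$; the interpolation error estimates $\|v - I_h v\|_{0,T}\lesssim h_T|v|_{1,D_T}$ and $\|v-I_h v\|_{0,F}\lesssim h_F^{1/2}|v|_{1,D_F}$ then yield $\|u^{\ast}-u^{\ast}_{h}\|_{1}\lesssim \eta_{h,1}(u^{\ast}_h,q^{\ast}_h,f,u_a)+\beta\|q^{\ast}-q^{\ast}_h\|_{0,\Gamma_i}$, the last term arising because $q^{\ast}\neq q^{\ast}_h$ on the right-hand side of \eqref{vp_state}. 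The costate error is handled analogously via \eqref{vp_costate}--\eqref{vp_costate_disc}: the right-hand side mismatch is $(u^{\ast}-u^{\ast}_h,v)_{\Gamma_a}$, which is controlled by $\|u^{\ast}-u^{\ast}_h\|_{1}$ through the trace inequality, giving $\|p^{\ast}-p^{\ast}_{h}\|_{1}\lesssim \eta_{h,2}(p^{\ast}_h,u^{\ast}_h,z)+\|u^{\ast}-u^{\ast}_h\|_{1}$.

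**Closing the loop and the main obstacle.** At this point I have a system of three inequalities of the form $\|q\text{-err}\|\lesssim \|p\text{-err}\| + \text{osc}$, $\|u\text{-err}\|\lesssim \eta_1 + \beta\|q\text{-err}\|$, $\|p\text{-err}\|\lesssim \eta_2 + \|u\text{-err}\|$. Substituting the first two into the third, $\|p\text{-err}\|\lesssim \eta_2 + \eta_1 + \beta\|q\text{-err}\| \lesssim \eta_2+\eta_1 + \|p\text{-err}\| + \beta\,\text{osc}$, which is circular unless the constant in front of $\|p\text{-err}\|$ is strictly less than one — and it need not be. The genuinely delicate point, and the reason the factor $\beta^{-2}$ appears in \eqref{global_upperbound}, is that one must avoid this circularity by handling the flux error more carefully: rather than bounding $\|q^{\ast}-q^{\ast}_h\|_{0}$ by the full costate error, one exploits that $p^{\ast}=\beta q^{\ast}$ and $p^{\ast}_h$ is the Galerkin projection-like object, so the relevant quantity is the consistency error of \eqref{gateaux_disc} tested against $q^{\ast}-q^{\ast}_h$, which is itself an oscillation/estimator term rather than the energy error of $p$. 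Concretely I would use \eqref{gateaux_cont} with $w=q^{\ast}-q^{\ast}_h$ and \eqref{gateaux_disc} with $w_h=$ an interpolant of $q^{\ast}-q^{\ast}_h$ to get $\beta\|q^{\ast}-q^{\ast}_h\|_{0,\Gamma_i}^2 = (p^{\ast}-p^{\ast}_h, q^{\ast}-q^{\ast}_h)_{\Gamma_i}$ and then re-route the $p$-error on $\Gamma_i$ into the face residual $J_{F,1}$ rather than the global $H^1$-norm. Tracking all the $\beta$-dependent constants through this argument is the bookkeeping heart of the proof; everything else (interpolation estimates, integration by parts, trace inequalities) is standard residual-estimator machinery. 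I would finally sum the elementwise contributions over $\mathcal{T}_h$, absorb oscillation into the estimator (since $\mathrm{osc}\le\eta_h$ up to constants), and collect the $\beta$-powers to arrive at \eqref{global_upperbound}.
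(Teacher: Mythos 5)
First, a remark on scope: the paper does not actually prove Theorem \ref{thm_reliability} --- it is quoted from \cite{lxz}, where the proof is given --- so your attempt can only be measured against that reference. The standard ingredients of your sketch are fine: the residual/interpolation machinery for the state and costate equations, the identity $\beta q^{\ast}=p^{\ast}$ on $\Gamma_{i}$ from \eqref{gateaux_cont}, and (since $p_{h}^{\ast}|_{\Gamma_{i}}\in V_{h,\Gamma_{i}}$) the exact discrete counterpart $\beta q_{h}^{\ast}=p_{h}^{\ast}$ on $\Gamma_{i}$ from \eqref{gateaux_disc}, which indeed yields $\beta\|q^{\ast}-q_{h}^{\ast}\|_{0,\Gamma_{i}}^{2}=(p^{\ast}-p_{h}^{\ast},q^{\ast}-q_{h}^{\ast})_{\Gamma_{i}}$. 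You also correctly diagnose that the naive chain of three one-sided bounds is circular.

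The gap is that your proposed escape from that circularity is not an argument. ``Re-routing the $p$-error on $\Gamma_{i}$ into the face residual $J_{F,1}$'' cannot work: the term $(p^{\ast}-p_{h}^{\ast},q^{\ast}-q_{h}^{\ast})_{\Gamma_{i}}$ contains the exact costate $p^{\ast}$, which depends on $u^{\ast}$ and hence on $q^{\ast}$, and no elementwise integration by parts turns it into a computable residual. The missing idea is to introduce the auxiliary continuous pair $u(q_{h}^{\ast}),\,p(q_{h}^{\ast})$ solving \eqref{vp_state}--\eqref{vp_costate} with $q^{\ast}$ replaced by $q_{h}^{\ast}$ (exactly the device the paper deploys later in section 5.2 with $q_{\infty}^{\ast}$), split $p^{\ast}-p_{h}^{\ast}=(p^{\ast}-p(q_{h}^{\ast}))+(p(q_{h}^{\ast})-p_{h}^{\ast})$, and observe from the adjoint symmetry of the optimality system that
\[
(p^{\ast}-p(q_{h}^{\ast}),q^{\ast}-q_{h}^{\ast})_{\Gamma_{i}}
=-a\bigl(u^{\ast}-u(q_{h}^{\ast}),p^{\ast}-p(q_{h}^{\ast})\bigr)
=-\|u^{\ast}-u(q_{h}^{\ast})\|_{0,\Gamma_{a}}^{2}\leq 0,
\]
so this piece can simply be dropped. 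What remains is $\|p(q_{h}^{\ast})-p_{h}^{\ast}\|_{0,\Gamma_{i}}$, a pure discretization error for a pair of direct problems driven by the \emph{same} discrete flux $q_{h}^{\ast}$ on both the continuous and discrete sides; your standard machinery bounds it by $C\eta_{h}$ with no reference to $q^{\ast}-q_{h}^{\ast}$. This gives $\|q^{\ast}-q_{h}^{\ast}\|_{0,\Gamma_{i}}\leq C\beta^{-1}\eta_{h}$, after which the $u$- and $p$-bounds follow by triangle inequalities through the auxiliary pair, and squaring produces the $\beta^{-2}$. Without this sign observation the circularity you identified is fatal, so as written the proof is incomplete.
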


\begin{thm}\label{thm_efficiency} There exists a constant $C$ depending
on the shape-regularity of $\mathcal{T}_{h}$ and the coefficients $\alpha$ and $\gamma$, such that for any
$T\in\mathcal{T}_{h}$,
\begin{equation}\label{local_efficiency}
    \begin{split}
        \eta_{T,h}^{2}(u^{\ast}_{h},p^{\ast}_{h},q^{\ast}_{h},f,u_{a},z)\leq C(&\|u^{\ast}-u^{\ast}_{h}\|^{2}_{0,\omega_{T}}+\|p^{\ast}-p^{\ast}_{h}\|^{2}_{0,\omega_{T}}+\|q^{\ast}-q^{\ast}_{h}\|^{2}_{0,\partial
                            T\cap\Gamma_{i}}\\
                           &\mathrm{osc}^{2}_{h}(f,\omega_{T})+\mathrm{osc}^{2}_{h}(u^{\ast}_{h},q^{\ast}_{h},\partial T)+\mathrm{osc}^{2}_{h}(p^{\ast}_{h},u^{\ast}_{h},\partial
T)).
    \end{split}
\end{equation}
\end{thm}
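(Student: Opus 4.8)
\medskip
\noindent\textit{Proof strategy.} This is a local, residual-type estimate, so the plan is to run the classical bubble-function argument (cf.\ \cite{ver} \cite{mor3} \cite{siebert}), adapted to the three boundary conditions and to the coupled optimality system; the discrete equations play no role, only the continuous ones. The starting point is that, through \eqref{vp_state}--\eqref{gateaux_cont}, the exact triplet satisfies the strong forms $-\boldsymbol{\nabla}\cdot(\alpha\boldsymbol{\nabla}u^{\ast})=f$ and $-\boldsymbol{\nabla}\cdot(\alpha\boldsymbol{\nabla}p^{\ast})=0$ in $\Omega$, the Robin conditions $\gamma u_{a}-\gamma u^{\ast}-\alpha\boldsymbol{\nabla}u^{\ast}\cdot\boldsymbol{n}=0$ and $u^{\ast}-z-\gamma p^{\ast}-\alpha\boldsymbol{\nabla}p^{\ast}\cdot\boldsymbol{n}=0$ on $\Gamma_{a}$, and the Neumann conditions $q^{\ast}+\alpha\boldsymbol{\nabla}u^{\ast}\cdot\boldsymbol{n}=0$ and $\alpha\boldsymbol{\nabla}p^{\ast}\cdot\boldsymbol{n}=0$ on $\Gamma_{i}$. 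Subtracting each of these from the matching term of $R_{T,i}$ or $J_{F,i}$ rewrites every residual purely in terms of the errors $e_{u}:=u^{\ast}-u^{\ast}_{h}$, $e_{p}:=p^{\ast}-p^{\ast}_{h}$, $e_{q}:=q^{\ast}-q^{\ast}_{h}$, plus the non-polynomial data $f,u_{a},z$. The families $(R_{T,1},J_{F,1})$ and $(R_{T,2},J_{F,2})$ are then handled independently, each tested against its own identity \eqref{vp_state} or \eqref{vp_costate}; I describe the first, the second being analogous except that it also sees $e_{u}$ (through the $u^{\ast}$ on the right-hand side of \eqref{vp_costate}), which is why $\|e_{u}\|$ and $\|e_{p}\|$ both appear on the right of \eqref{local_efficiency}.

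For the element residual $R_{T,1}(u^{\ast}_{h})$ I would take the interior bubble $\psi_{T}$, insert $\phi=\overline{R}_{T,1}\psi_{T}\in H^{1}_{0}(T)$ (a polynomial approximation of the residual times $\psi_{T}$) into \eqref{vp_state}, where all boundary pairings drop, integrate by parts on $T$, and combine the norm equivalence on the polynomial space with the inverse estimate $\|\boldsymbol{\nabla}\phi\|_{0,T}\lesssim h_{T}^{-1}\|\phi\|_{0,T}$; this gives $h_{T}\|R_{T,1}(u^{\ast}_{h})\|_{0,T}\lesssim\|e_{u}\|_{1,T}+h_{T}\|f-\overline{f}_{T}\|_{0,T}$, the last term being $\mathrm{osc}_{h}(f,T)$. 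For an interior face $F\subset\partial T$, with $J_{F,1}=[\alpha\boldsymbol{\nabla}u^{\ast}_{h}\cdot\boldsymbol{n}_{F}]$, I would use the face bubble $\psi_{F}$ on $\omega_{F}$, test \eqref{vp_state} with a polynomial extension of $\overline{J}_{F,1}$ times $\psi_{F}$, integrate by parts on the two elements of $\omega_{F}$, and absorb the resulting volume term by the element bound just derived; this yields $h_{F}^{1/2}\|J_{F,1}\|_{0,F}\lesssim\|e_{u}\|_{1,\omega_{F}}+h_{T}\|f-\overline{f}_{T}\|_{0,\omega_{F}}$.

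The boundary faces are where the inverse-problem structure enters. After the subtraction above one has $J_{F,1}(u^{\ast}_{h},q^{\ast}_{h})=\gamma e_{u}+\alpha\boldsymbol{\nabla}e_{u}\cdot\boldsymbol{n}$ on $F\in\mathcal{F}_{h}(\Gamma_{a})$, and $J_{F,1}(u^{\ast}_{h},q^{\ast}_{h})=e_{q}+\alpha\boldsymbol{\nabla}e_{u}\cdot\boldsymbol{n}$ on $F\in\mathcal{F}_{h}(\Gamma_{i})$. In both cases I would multiply by a face bubble supported in the single element $\omega_{F}$, test \eqref{vp_state} with it while retaining the relevant boundary pairing (the mass term on $\Gamma_{a}$, the datum term $(e_{q},\cdot)_{\Gamma_{i}}$ on $\Gamma_{i}$), and integrate by parts: the gradient trace $\alpha\boldsymbol{\nabla}e_{u}\cdot\boldsymbol{n}$ is controlled by the bubble technique together with the element bound, the zeroth-order traces of $e_{u}$ by a scaled trace inequality, and $(e_{q},\cdot)_{\Gamma_{i}}$ is left intact and yields $h_{F}^{1/2}\|e_{q}\|_{0,F}\le C\|e_{q}\|_{0,F}$, the weight being harmless since $h_{F}\le\mathrm{diam}(\Omega)$. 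Since $u_{a}$ (and $z$, for the costate family) is only $L^{2}$, one must replace $J_{F,1}$ by $\overline{J}_{F,1}$ before invoking the polynomial norm equivalence, and the defect is precisely $\mathrm{osc}_{h}(u^{\ast}_{h},q^{\ast}_{h},\partial T)$ (respectively $\mathrm{osc}_{h}(p^{\ast}_{h},u^{\ast}_{h},\partial T)$). Summing the element and face bounds over the faces of $T$, absorbing the neighbours' element contributions into their own face estimates, delivers \eqref{local_efficiency} with a constant depending only on the shape-regularity and on $\alpha,\gamma$.

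The step I expect to be the main obstacle is the $\Gamma_{i}$-face residual. Unlike in the direct-problem template of \cite{mor3} \cite{siebert} \cite{ver}, the Neumann datum there is the \emph{unknown} flux $q^{\ast}$, which lies only in $L^{2}(\Gamma_{i})$ and carries no extra regularity, so it can be neither integrated by parts nor interpolated; the only safe option is to transport $(e_{q},\cdot)_{\Gamma_{i}}$ through the entire bubble computation as an untouched boundary pairing and to bound it by duality on $F$ at the very last step --- which is exactly what puts the plain $L^{2}$ quantity $\|q^{\ast}-q^{\ast}_{h}\|_{0,\partial T\cap\Gamma_{i}}$, rather than a weighted or higher-order one, on the right-hand side of \eqref{local_efficiency}.
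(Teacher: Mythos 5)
The paper does not prove this theorem: Theorems \ref{thm_reliability} and \ref{thm_efficiency} are imported verbatim from \cite{lxz}, so there is no in-paper proof to compare against. Your proposal is the standard Verf\"urth bubble-function argument, which is exactly how the local lower bound is established in \cite{lxz}, and the structure is sound: rewriting each residual via the strong form of \eqref{vp_state}--\eqref{vp_costate} in terms of $e_u$, $e_p$, $e_q$; interior/face bubbles with inverse and scaled trace inequalities; projecting $J_{F,1}$, $J_{F,2}$ onto face averages to isolate the oscillation terms since $u_a,z\in L^2$ only; and, crucially, carrying the pairing $(e_q,\cdot)_{\Gamma_i}$ untouched and closing it by Cauchy--Schwarz on $F$, which is precisely what produces the unweighted $\|q^{\ast}-q^{\ast}_h\|_{0,\partial T\cap\Gamma_i}$ term. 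One remark: your derivation (correctly) yields the element and face bounds in terms of the local $H^1$ quantities $\|e_u\|_{1,\omega_T}$ and $\|e_p\|_{1,\omega_T}$ --- the bubble technique cannot produce the $h_T$-weighted residual bound with only $\|e_u\|_{0,\omega_T}$ --- whereas \eqref{local_efficiency} as printed carries the subscript $0$; this appears to be a typographical slip in the statement (the summed version used in the proof of Theorem \ref{thm_convergence_estimator} is bounded by global $H^1$ norms anyway), so your $H^1$-norm form is the provable one and suffices for every subsequent use in the paper.
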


Based on the error estimators provided in Theorems\,\ref{thm_reliability} and \ref{thm_efficiency} above,
the following adaptive algorithm was proposed for the flux reconstruction
in \cite{lxz}. In what follows the dependence on the triangulations is
indicated by the number $k$ of the mesh refinements.

\begin{alg}
\label{alg_afem_flux-recon} Given a parameter $\theta\in[0,1]$
and a conforming initial mesh $\mathcal{T}_{0}$. Set $k:=0$.
\begin{enumerate}
    \item\emph{\textsf{(SOLVE)}} Solve the discrete problems
    \eqref{vp_state_disc}-\eqref{gateaux_disc} on $\mathcal{T}_{k}$ for
    $(u^{\ast}_{k},p^{\ast}_{k},q^{\ast}_{k})\in V_{k}\times V_{k}\times V_{k,\Gamma_{i}}$.
    \item\emph{\textsf{(ESTIMATE)}} Compute the error estimator
    $\eta_{k}(u^{\ast}_{k},p^{\ast}_{k},q^{\ast}_{k},f,u_{a},z)$.
    \item\emph{\textsf{(MARK)}} Mark a subset
    $\mathcal{M}_{k}\subset\mathcal{T}_{k}$ such that
    \begin{equation}\label{maximum_marking}
        \forall~T\in\mathcal{M}_{k}\quad\eta_{T,k}(u^{\ast}_{k},p^{\ast}_{k},q^{\ast}_{k},f,u_{a},z)\geq
        \theta\max_{T\in\mathcal{T}_{k}}\eta_{T,k}(u^{\ast}_{k},p^{\ast}_{k},q^{\ast}_{k},f,u_{a},z).
    \end{equation}
    \item\emph{\textsf{(REFINE)}} Refine each triangle
    $T\in\mathcal{M}_{k}$ by the newest vertex bisection to get
    $\mathcal{T}_{k+1}$.
    \item Set $k:=k+1$ and go to Step 1.
\end{enumerate}
\end{alg}

A stopping criterion is normally included after step 2 to terminate the
iteration, which is omitted here for the notational convenience.
The maximum strategy \cite{br}, one of the most
common marking criteria, is used in the module \textsf{MARK} and we will
discuss more about other strategies in section 5.3. In addition,
the newest vertex bisection in the module \textsf{REFINE} guarantees the
uniform shape-regularity of $\{\mathcal{T}_{k}\}$ \cite{koss}
\cite{maubach} \cite{mitc} \cite{stev1} \cite{traxler} \cite{ver}.
In other words, all constants only depend on the initial mesh and
the given data but not on any particular mesh in the sequel. We
point out a practically important feature in our algorithm:
the additional marking for oscillation errors
and the interior node property for the refinement are both not required,
which are needed in the adaptive algorithm for an optimal control problem in \cite{ghik}. 
Finally, as the solution $(u^{\ast}_{k},q^{\ast}_{k})\in V_{k}\times V_{k,\Gamma_{i}}$ is
also the minimizer to problem
\eqref{constrained_min_disc}-\eqref{vp_state_constraint_disc} with
$h=k$, we shall view both of them as the same unless specified otherwise.

The adaptive Algorithm \ref{alg_afem_flux-recon} was implemented and analysed in \cite{lxz}.
Several nontrivial numerical examples
were tested there, with different types of singular fluxes, including fluxes with large jumps,
shape-spike fluxes and dipole-like fluxes. From the numerical experiments we have observed
that Algorithm \ref{alg_afem_flux-recon} is able to locate
the singularities of fluxes accurately, with the desired local mesh refinements around singularities.
Moreover, all the examples in \cite{lxz} have shown that
Algorithm \ref{alg_afem_flux-recon} ensures the convergence of the flux errors in $L^{2}$-norm,
even with essentially fewer degrees of freedom than the uniform refinement.
The aim of this work is to provide a rigorous mathematical justification of
the convergence of the adaptive finite element Algorithm \ref{alg_afem_flux-recon}.

\section{A limiting triplet}\label{sec:limit}

In this section, we demonstrate the convergence of the sequence
$\{(u^{\ast}_{k},p^{\ast}_{k},q^{\ast}_{k})\}$ generated by
Algorithm \ref{alg_afem_flux-recon}. To this
end, with $\{V_{k}\}$ and $\{V_{k,\Gamma_{i}}\}$ induced by Algorithm
\ref{alg_afem_flux-recon}, we define two limiting spaces:
$$
V_{\infty}:=\overline{\bigcup_{k\geq 0}V_{k}} ~~(\m{in} ~H^{1}\m{-norm}) \quad and
\quad
Q_{\infty}:=\overline{\bigcup_{k\geq 0}V_{k,\Gamma_{i}}} ~~(\m{in} ~L^{2}\m{-norm)}\,.
$$
We remark that $V_{\infty}$ and $Q_{\infty}$ are  a closed
subspace of $H^{1}(\Omega)$ and $L^{2}(\Gamma_{i})$ respectively. Then
we introduce a constrained minimization problem over
$Q_{\infty}$:
\begin{equation}\label{constrained_min_med}
    \min_{q\in Q_{\infty}}\mathcal{J}(q)=\frac{1}{2}\|u_{\infty}(q)-z\|^{2}_{0,\Gamma_{a}}+\frac{\beta}{2}\|q\|^{2}_{0,\Gamma_{i}},
\end{equation}
where $u_{\infty}:=u_{\infty}(q)\in V_{\infty}$ satisfies the
variational problem:
\begin{equation}\label{vp_state_constraint_med}
    a(u_{\infty},\phi)=(f,\phi)+(\gamma u_{a},\phi)_{\Gamma_{a}}-(q,\phi)_{\Gamma_{i}}\quad\forall~\phi\in
V_{\infty}.
\end{equation}

Following the arguments of \cite{xiezou} for
the system \eqref{constrained_min_cont}-\eqref{vp_state_constraint},
we can show that there exists a unique
minimizer to the optimization problem
\eqref{constrained_min_med}-\eqref{vp_state_constraint_med}.

Next we present the first result of this section,
namely the sequence
${q_{k}^{\ast}}$ generated by Algorithm 3.1 converges strongly  to the
minimizer $q^{\ast}_{\infty}$ of problem
\eqref{constrained_min_med}-\eqref{vp_state_constraint_med}. For the purpose
we need some auxiliary results.

\begin{lem}\label{lem_strong_convergence_aux01}
Let $\{V_{k}\times
V_{k,\Gamma_{i}}\}$ be a sequence of discrete spaces generated by
Algorithm \ref{alg_afem_flux-recon}. If the sequence
$\{q_{k}\}\subset\bigcup_{k\geq 0}V_{k,\Gamma_{i}}$ weakly converges
to some $q^{\ast}\in Q_{\infty}$ in $L^{2}(\Gamma_{i})$, then there
exists a subsequence $\{q_{m}\}$ with $m=k_{n}$, such that for the
sequence $\{u_{m}(q_{m})\}\subset\bigcup_{k\geq 0}V_{k}$ produced by
\eqref{vp_state_constraint_disc} with $h$ replaced by $m$ and
$u{_\infty}(q^{\ast})\in V_{\infty}$ generated by
\eqref{vp_state_constraint_med} with $q=q^{\ast}$ there holds
\begin{equation}\label{strong-convergence_aux01}
    u_{m}(q_{m})\rightarrow u{_\infty}(q^{\ast})\quad\mbox{in
}L^{2}(\Gamma).
\end{equation}
\end{lem}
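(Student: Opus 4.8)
The plan is to obtain a uniform $H^{1}(\Omega)$-bound on the discrete states $u_{k}(q_{k})$, extract a weakly convergent subsequence, identify its limit with $u_{\infty}(q^{\ast})$ by a nestedness/density argument, and finally upgrade weak $H^{1}$-convergence to strong $L^{2}(\Gamma)$-convergence using compactness of the trace operator.

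First I would note that, $\{q_{k}\}$ being weakly convergent in $L^{2}(\Gamma_{i})$, it is bounded there; testing \eqref{vp_state_constraint_disc} (with $h$ replaced by $k$) with $\phi_{h}=u_{k}(q_{k})$ and using the coercivity of $a(\cdot,\cdot)$, the Cauchy--Schwarz inequality and the trace theorem then yields $\|u_{k}(q_{k})\|_{1}\le C$ with $C$ independent of $k$. Consequently there is a subsequence $\{u_{m}(q_{m})\}$, $m=k_{n}$, and some $u^{\dagger}\in H^{1}(\Omega)$ with $u_{m}(q_{m})\rightharpoonup u^{\dagger}$ in $H^{1}(\Omega)$; since every $u_{m}(q_{m})\in V_{m}\subset V_{\infty}$ and $V_{\infty}$ is a closed (hence weakly closed) subspace of $H^{1}(\Omega)$, the limit satisfies $u^{\dagger}\in V_{\infty}$.

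Next I would identify $u^{\dagger}$. Fixing $\phi\in\bigcup_{k\ge0}V_{k}$, say $\phi\in V_{k_{0}}$, the nestedness of the spaces produced by the newest vertex bisection gives $\phi\in V_{m}$ for every $m\ge k_{0}$, so for such $m$
\[
 a(u_{m}(q_{m}),\phi)=(f,\phi)+(\gamma u_{a},\phi)_{\Gamma_{a}}-(q_{m},\phi)_{\Gamma_{i}}.
\]
As $m\to\infty$ the left side converges to $a(u^{\dagger},\phi)$ because $a(\cdot,\phi)$ is a bounded linear functional on $H^{1}(\Omega)$, while $(q_{m},\phi)_{\Gamma_{i}}\to(q^{\ast},\phi)_{\Gamma_{i}}$ by the weak convergence of $\{q_{m}\}$ in $L^{2}(\Gamma_{i})$, the remaining terms being fixed; hence $a(u^{\dagger},\phi)=(f,\phi)+(\gamma u_{a},\phi)_{\Gamma_{a}}-(q^{\ast},\phi)_{\Gamma_{i}}$ for all $\phi\in\bigcup_{k}V_{k}$, and by density of $\bigcup_{k}V_{k}$ in $V_{\infty}$ together with the continuity of all the terms this extends to every $\phi\in V_{\infty}$. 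Thus $u^{\dagger}$ solves \eqref{vp_state_constraint_med} with $q=q^{\ast}$, and by the unique solvability of that problem (a direct consequence of $a(\cdot,\cdot)$ being coercive and bounded on the closed subspace $V_{\infty}$) I conclude $u^{\dagger}=u_{\infty}(q^{\ast})$. It then remains to observe that the trace map $H^{1}(\Omega)\to L^{2}(\Gamma)$ is compact, so it sends the weakly convergent sequence $u_{m}(q_{m})\rightharpoonup u_{\infty}(q^{\ast})$ to a strongly convergent one, which is exactly \eqref{strong-convergence_aux01}.

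The step I expect to be the main obstacle is the identification of the weak limit: $u_{\infty}(q^{\ast})$ is characterised by a variational problem posed on the infinite-dimensional space $V_{\infty}$, so one must first exploit the nestedness $V_{k_{0}}\subset V_{m}$ to test on each individual mesh against a fixed $\phi$, pass to the limit, and only afterwards invoke density to reach all test functions in $V_{\infty}$, while at the same time making sure the load term $(q_{m},\phi)_{\Gamma_{i}}$ passes to the limit --- precisely the point where the weak convergence hypothesis on $\{q_{k}\}$ is used. The other ingredients (coercivity and boundedness of $a(\cdot,\cdot)$, the trace theorem, and compactness of the trace embedding $H^{1}(\Omega)\hookrightarrow L^{2}(\Gamma)$) are entirely standard.
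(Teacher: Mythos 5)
Your proposal is correct and follows essentially the same route as the paper's proof: uniform $H^{1}$-boundedness via testing with $u_{k}(q_{k})$, extraction of a weakly convergent subsequence whose limit lies in the weakly closed space $V_{\infty}$, identification of the limit by testing against a fixed $\phi$ in some $V_{l}$ (using nestedness), passing to the limit with the weak convergence of $\{q_{m}\}$, and then invoking density of $\bigcup_{k}V_{k}$ in $V_{\infty}$ together with uniqueness; the strong $L^{2}(\Gamma)$ convergence comes from the compact trace embedding, which the paper folds into the subsequence extraction while you invoke it at the end --- an immaterial difference.
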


\begin{proof}
    Taking $\phi_{k}=u_{k}(q_{k})$ in \eqref{vp_state_constraint_disc}, we immediately
    know that $\|u_{k}(q_{k})\|_{1}$ is uniformly bounded independently
    of $k$ and hence there exist a subsequence, denoted by $\{u_{m}(q_{m})\}$ with $m=k_{n}$, and
    some $u^{\ast}\in H^{1}(\Omega)$ such that
    \begin{equation}\label{lem_strong_convergence_aux01_proof01}
        u_{m}(q_{m})\rightarrow u^{\ast}\quad\mbox{weakly in }H^{1}(\Omega),\qquad
        u_{m}(q_{m})\rightarrow u^{\ast}\quad\mbox{in }L^{2}(\Gamma).
    \end{equation}
    We only need to show $u^{\ast}=u_{\infty}(q^{\ast})$. As $V_{\infty}$ is weakly closed, $u^{\ast}\in V_{\infty}$. For any positive integer $l$,
    when we choose $m\geq l$, we know from \eqref{vp_state_constraint_disc} that
    \[
        (\alpha\boldsymbol{\nabla}u_{m}(q_{m}),\boldsymbol{\nabla}\phi_{l})+(\gamma u_{m}(q_{m}),\phi_{l})_{\Gamma_{a}}=(f,\phi_{l})+(\gamma u_{a},\phi_{l})_{\Gamma_{a}}-(q_{m},\phi_{l})_{\Gamma_{i}}\quad\forall~\phi_{l}\in
        V_{l}.
    \]
    Letting $m$ go to infinity and noting the convergence results in \eqref{lem_strong_convergence_aux01_proof01}
    as well as the weak convergence of $\{q_{k}\}$, we find
    \[
        (\alpha\boldsymbol{\nabla}u^{\ast},\boldsymbol{\nabla}\phi_{l})+(\gamma u^{\ast},\phi_{l})_{\Gamma_{a}}=(f,\phi_{l})+(\gamma u_{a},\phi_{l})_{\Gamma_{a}}-(q^{\ast},\phi_{l})_{\Gamma_{i}}\quad\forall~\phi_{l}\in
        V_{l}.
    \]
    As $l$ and $\phi_{l}\in V_{l}$ are arbitrary, we further obtain
    \[
        (\alpha\boldsymbol{\nabla}u^{\ast},\boldsymbol{\nabla}\phi)+(\gamma u^{\ast},\phi)_{\Gamma_{a}}=(f,\phi)+(\gamma u_{a},\phi)_{\Gamma_{a}}-(q^{\ast},\phi)_{\Gamma_{i}}\quad\forall~\phi\in
        V_{\infty},
    \]
    which leads to the desired conclusion.
\end{proof}

\begin{lem}\label{lem_strong_convergence_aux02}
Let $\{V_{k}\times V_{k,\Gamma_{i}}\}$ be a sequence of discrete spaces generated by
Algorithm \ref{alg_afem_flux-recon}. If the sequence
$\{q_{k}\}\subset\bigcup_{k\geq 0}V_{k,\Gamma_{i}}$ strongly
converges to some $q^{\ast}\in Q_{\infty}$ in $L^{2}(\Gamma_{i})$,
then for the sequence $\{u_{k}(q_{k})\}\subset\bigcup_{k\geq
0}V_{k}$ given by \eqref{vp_state_constraint_disc} with $h$ replaced
by $k$ and $u{_\infty}(q^{\ast})\in V_{\infty}$ given by
\eqref{vp_state_constraint_med} with $q=q^{\ast}$ there holds
\begin{equation}\label{strong-convergence_aux02}
    u_{k}(q_{k})\rightarrow u{_\infty}(q^{\ast})\quad\mbox{in }H^{1}(\Omega).
\end{equation}
\end{lem}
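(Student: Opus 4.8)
The plan is to prove strong $H^{1}$-convergence directly by an energy (C\'ea-type) argument, so that no subsequence extraction is needed. Abbreviate $u_{k}:=u_{k}(q_{k})$ and $u_{\infty}:=u_{\infty}(q^{*})$. First one records a uniform bound: taking $\phi_{k}=u_{k}$ in \eqref{vp_state_constraint_disc}, using the equivalence of $\|\cdot\|_{a}$ and $\|\cdot\|_{1}$, a trace inequality, and the boundedness of $\{q_{k}\}$ in $L^{2}(\Gamma_{i})$ (it converges strongly, hence is bounded), one gets $\|u_{k}\|_{1}\le C$ uniformly in $k$; since $\|u_{\infty}\|_{1}$ is a fixed number, $\|u_{k}-u_{\infty}\|_{a}\le C$ for all $k$.

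The key point is a perturbed Galerkin orthogonality. Because the meshes produced by newest-vertex bisection are nested, $V_{k}\subset V_{\infty}$ for every $k$, so $u_{\infty}$ from \eqref{vp_state_constraint_med} may be tested against any $\phi_{k}\in V_{k}$; subtracting that identity from \eqref{vp_state_constraint_disc} (with $h=k$) yields
\[
  a(u_{k}-u_{\infty},\phi_{k})=(q^{*}-q_{k},\phi_{k})_{\Gamma_{i}}\qquad\forall\,\phi_{k}\in V_{k}.
\]
Moreover, since $V_{\infty}=\overline{\bigcup_{k\ge0}V_{k}}$ in the $H^{1}$-norm and the $V_{k}$ are nested, $\mathrm{dist}_{a}(u_{\infty},V_{k})\to0$, so one may fix a sequence $\phi_{k}\in V_{k}$ with $\|u_{\infty}-\phi_{k}\|_{a}\to0$ as $k\to\infty$.

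With this $\phi_{k}$, decompose (using $u_{k}-\phi_{k}\in V_{k}$)
\[
  \|u_{k}-u_{\infty}\|_{a}^{2}=a(u_{k}-u_{\infty},u_{k}-\phi_{k})+a(u_{k}-u_{\infty},\phi_{k}-u_{\infty}).
\]
For the first term, the perturbed orthogonality, the trace inequality, and the uniform bound $\|u_{k}-\phi_{k}\|_{a}\le\|u_{k}-u_{\infty}\|_{a}+\|u_{\infty}-\phi_{k}\|_{a}\le C$ give $|a(u_{k}-u_{\infty},u_{k}-\phi_{k})|=|(q^{*}-q_{k},u_{k}-\phi_{k})_{\Gamma_{i}}|\le C\|q^{*}-q_{k}\|_{0,\Gamma_{i}}$. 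For the second term, Cauchy-Schwarz and Young's inequality give $a(u_{k}-u_{\infty},\phi_{k}-u_{\infty})\le\tfrac12\|u_{k}-u_{\infty}\|_{a}^{2}+\tfrac12\|\phi_{k}-u_{\infty}\|_{a}^{2}$. Absorbing the quadratic term, one obtains
\[
  \tfrac12\|u_{k}-u_{\infty}\|_{a}^{2}\le C\|q^{*}-q_{k}\|_{0,\Gamma_{i}}+\tfrac12\|\phi_{k}-u_{\infty}\|_{a}^{2},
\]
whose right-hand side tends to $0$ — the first term by the strong convergence $q_{k}\to q^{*}$ in $L^{2}(\Gamma_{i})$, the second by the choice of $\phi_{k}$. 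Hence $\|u_{k}-u_{\infty}\|_{a}\to0$, which by norm equivalence is exactly \eqref{strong-convergence_aux02}.

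The argument is mostly routine; the two places needing care are (i) the legitimacy of the perturbed Galerkin relation, which rests on $V_{k}\subset V_{\infty}$ for all $k$ — a consequence of the nestedness of the adaptive meshes — and (ii) smallness of the perturbation $(q^{*}-q_{k},u_{k}-\phi_{k})_{\Gamma_{i}}$, which needs both the uniform $H^{1}$-bound on $u_{k}$ (to keep $u_{k}-\phi_{k}$ bounded in the trace norm) and the strong, not merely weak, convergence of $\{q_{k}\}$. This last point is precisely where the hypothesis here is stronger than in Lemma \ref{lem_strong_convergence_aux01}, and is why the full sequence — not just a subsequence — converges.
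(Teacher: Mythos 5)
Your proof is correct, but it is organized differently from the paper's. The paper splits the error by inserting an auxiliary discrete solution $u_{k}(q^{\ast})\in V_{k}$ of \eqref{vp_state_constraint_disc} with datum $q^{\ast}$: the piece $u_{k}(q_{k})-u_{k}(q^{\ast})$ is controlled by discrete stability, giving $\|u_{k}(q_{k})-u_{k}(q^{\ast})\|_{1}\le C\|q_{k}-q^{\ast}\|_{0,\Gamma_{i}}$, while the piece $u_{\infty}(q^{\ast})-u_{k}(q^{\ast})$ is handled by C\'ea's lemma (exact Galerkin orthogonality, since $u_{k}(q^{\ast})$ is the conforming approximation of \eqref{vp_state_constraint_med}) together with the density of $\bigcup_{k}V_{k}$ in $V_{\infty}$. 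You instead skip the auxiliary problem and work directly with $u_{k}(q_{k})-u_{\infty}(q^{\ast})$ via a perturbed Galerkin orthogonality $a(u_{k}-u_{\infty},\phi_{k})=(q^{\ast}-q_{k},\phi_{k})_{\Gamma_{i}}$, which is legitimate exactly for the reason you state ($V_{k}\subset V_{\infty}$ by nestedness), and then close the estimate with a density sequence $\phi_{k}\to u_{\infty}$ and Young's inequality. Both arguments rest on the same two ingredients (strong convergence of $q_{k}$ and density of the discrete spaces in $V_{\infty}$), so the difference is one of bookkeeping rather than substance; the paper's version has the minor advantage of yielding the clean Lipschitz-type bound $\|u_{k}(q_{k})-u_{\infty}(q^{\ast})\|_{1}\le C\big(\|q_{k}-q^{\ast}\|_{0,\Gamma_{i}}+\inf_{v\in V_{k}}\|u_{\infty}(q^{\ast})-v\|_{1}\big)$, whereas your Young-inequality absorption only controls $\|u_{k}-u_{\infty}\|_{a}^{2}$ by the first power of $\|q_{k}-q^{\ast}\|_{0,\Gamma_{i}}$ — immaterial for the qualitative statement \eqref{strong-convergence_aux02}, and easily repaired if a rate were ever needed.
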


\begin{proof}
    We begin with an auxiliary discrete problem: Find $u_{k}(q^{\ast})\in V_{k}$ such that
    \begin{equation}\label{lem_strong_convergence_aux02_proof01}
        a(u_{k}(q^{\ast}),\phi)=(f,\phi)+(\gamma u_{a},\phi)_{\Gamma_{a}}-(q^{\ast},\phi)_{\Gamma_{i}}\quad\forall~\phi\in
        V_{k}.
    \end{equation}
    Subtracting \eqref{lem_strong_convergence_aux02_proof01} from \eqref{vp_state_constraint_disc} with
    $\phi=u_{k}(q_{k})-u_{k}(q^{\ast})$ and using the trace theorem as well as the norm equivalence we come to
    the estimate
    \[
        \|u_{k}(q^{\ast})-u_{k}(q_{k})\|_{1}\leq
        C\|q^{\ast}-q_{k}\|_{0,\Gamma_{i}}.
    \]
    On the other hand, we note that \eqref{lem_strong_convergence_aux02_proof01}
    is a finite element approximation of \eqref{vp_state_constraint_med} with
    $q=q^{\ast}\in Q_{\infty}$, so the Cea's lemma admits an optimal
    approximation property
    \[
        \|u_{\infty}(q^{\ast})-u_{k}(q^{\ast})\|_{1}\leq C\inf_{v\in
            V_{k}}\|u_{\infty}(q^{\ast})-v\|_{1}.
    \]
    Finally, the desired convergence \eqref{strong-convergence_aux02} is
    the consequence of the above two estimates and the density of $\bigcup_{k\geq 0}V_{k}$ in
    $V_{\infty}$.
\end{proof}

Now we are in a position to show the first main result of this
section.

\begin{thm}\label{thm_conv_constrained_min_med} Let $\{V_{k}\times
V_{k,\Gamma_{i}}\}$ be a sequence of discrete spaces generated by
Algorithm \ref{alg_afem_flux-recon} and $\{q_{k}^{\ast}\}$ be the
corresponding sequence of minimizers to the discrete problem
\eqref{constrained_min_disc}-\eqref{vp_state_constraint_disc}. Then
the whole sequence $\{q_{k}^{\ast}\}$ converges strongly in
$L^{2}(\Gamma_{i})$ to the unique minimizer $q_{\infty}^{\ast}$ of
the problem
\eqref{constrained_min_med}-\eqref{vp_state_constraint_med}.
\end{thm}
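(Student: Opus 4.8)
The plan is to combine a weak compactness argument with the discrete minimality of $q_k^\ast$ and the two auxiliary Lemmas \ref{lem_strong_convergence_aux01} and \ref{lem_strong_convergence_aux02}, and then to promote weak convergence to strong convergence by showing that the objective values converge. First I would establish boundedness and extract a weak limit. Since $0\in V_{k,\Gamma_{i}}$ for every $k$, the minimality of $q_k^\ast$ for \eqref{constrained_min_disc}--\eqref{vp_state_constraint_disc} gives $\frac{\beta}{2}\|q_k^\ast\|_{0,\Gamma_{i}}^2\le\mathcal{J}(q_k^\ast)\le\mathcal{J}(0)=\frac12\|u_k(0)-z\|_{0,\Gamma_{a}}^2$, and testing \eqref{vp_state_constraint_disc} with $q_h=0$ and $\phi_h=u_k(0)$ together with the norm equivalence shows $\|u_k(0)\|_{1}$ is bounded uniformly in $k$. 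Hence $\{q_k^\ast\}$ is bounded in $L^2(\Gamma_{i})$ and possesses a subsequence $\{q_m^\ast\}$ with $q_m^\ast\rightharpoonup\tilde q$ in $L^2(\Gamma_{i})$; as $q_m^\ast\in V_{m,\Gamma_{i}}\subset Q_\infty$ and $Q_\infty$ is weakly closed, $\tilde q\in Q_\infty$. Applying Lemma \ref{lem_strong_convergence_aux01} to $\{q_m^\ast\}$, we may pass to a further subsequence, still denoted $\{q_m^\ast\}$, so that $u_m(q_m^\ast)\to u_\infty(\tilde q)$ in $L^2(\Gamma)$.

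Next I would show that $\tilde q$ minimizes the limiting problem \eqref{constrained_min_med}--\eqref{vp_state_constraint_med}. Fix an arbitrary $q\in Q_\infty$; by the definition of $Q_\infty$ there are $q_k\in V_{k,\Gamma_{i}}$ with $q_k\to q$ in $L^2(\Gamma_{i})$, and Lemma \ref{lem_strong_convergence_aux02} gives $u_k(q_k)\to u_\infty(q)$ in $H^1(\Omega)$, hence $\mathcal{J}(q_k)\to\frac12\|u_\infty(q)-z\|_{0,\Gamma_{a}}^2+\frac{\beta}{2}\|q\|_{0,\Gamma_{i}}^2$, the value of the functional in \eqref{constrained_min_med} at $q$. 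Since $q_m^\ast$ minimizes $\mathcal{J}$ over $V_{m,\Gamma_{i}}\ni q_m$, $\mathcal{J}(q_m^\ast)\le\mathcal{J}(q_m)$, so $\limsup_m\mathcal{J}(q_m^\ast)\le\frac12\|u_\infty(q)-z\|_{0,\Gamma_{a}}^2+\frac{\beta}{2}\|q\|_{0,\Gamma_{i}}^2$. On the other hand, the strong $L^2(\Gamma_{a})$-convergence of $u_m(q_m^\ast)$ and the weak lower semicontinuity of $\|\cdot\|_{0,\Gamma_{i}}$ give $\liminf_m\mathcal{J}(q_m^\ast)\ge\frac12\|u_\infty(\tilde q)-z\|_{0,\Gamma_{a}}^2+\frac{\beta}{2}\|\tilde q\|_{0,\Gamma_{i}}^2$. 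Taking $q=\tilde q$ in the upper bound, these two inequalities sandwich $\lim_m\mathcal{J}(q_m^\ast)$ and show that $\tilde q$ attains the minimum in \eqref{constrained_min_med}; by uniqueness of that minimizer, $\tilde q=q_\infty^\ast$.

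Finally I would upgrade the weak convergence to strong convergence. Using the upper bound above with $q=q_\infty^\ast$ together with the $\liminf$ lower bound, $\mathcal{J}(q_m^\ast)$ converges to the minimum value $\frac12\|u_\infty(q_\infty^\ast)-z\|_{0,\Gamma_{a}}^2+\frac{\beta}{2}\|q_\infty^\ast\|_{0,\Gamma_{i}}^2$; subtracting the state term, which tends to $\frac12\|u_\infty(q_\infty^\ast)-z\|_{0,\Gamma_{a}}^2$ by the strong $L^2(\Gamma)$-convergence of $u_m(q_m^\ast)$, yields $\|q_m^\ast\|_{0,\Gamma_{i}}\to\|q_\infty^\ast\|_{0,\Gamma_{i}}$. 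Combined with $q_m^\ast\rightharpoonup q_\infty^\ast$ in the Hilbert space $L^2(\Gamma_{i})$, norm convergence forces $q_m^\ast\to q_\infty^\ast$ strongly. Because the limit $q_\infty^\ast$ is independent of the chosen subsequence, a routine contradiction argument over subsequences promotes this to convergence of the whole sequence $\{q_k^\ast\}$.

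The main obstacle is the careful bookkeeping of the successive subsequences: Lemma \ref{lem_strong_convergence_aux01} only produces convergence of the states along a sub-subsequence extracted from a weakly convergent sequence of fluxes, so the identification of $\tilde q$ and the $\liminf$/$\limsup$ sandwich must all be carried out along that sub-subsequence, and it is only the uniqueness of $q_\infty^\ast$ that allows the passage back to the full sequence. A secondary point requiring care is that the recovery sequence used for the upper bound must lie in the discrete spaces $V_{k,\Gamma_{i}}$, which is available precisely because $Q_\infty=\overline{\bigcup_{k\geq0}V_{k,\Gamma_{i}}}$, so that the discrete minimality $\mathcal{J}(q_m^\ast)\le\mathcal{J}(q_m)$ can be invoked.
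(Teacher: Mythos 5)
Your proposal is correct and follows essentially the same route as the paper's proof: uniform boundedness of $\{q_k^\ast\}$, extraction of a weak limit in $Q_\infty$, the $\liminf$/$\limsup$ sandwich built from the discrete minimality, the recovery sequence in $\bigcup_k V_{k,\Gamma_i}$ and Lemmas \ref{lem_strong_convergence_aux01}--\ref{lem_strong_convergence_aux02}, identification of the limit with $q_\infty^\ast$ by uniqueness, and the upgrade to strong convergence via norm convergence plus weak convergence. The only cosmetic differences are that you supply an explicit justification of the uniform bound (which the paper merely asserts) and pass to the full sequence by the standard subsequence-contradiction argument rather than the paper's direct argument on the whole sequence.
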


\begin{proof}
    The fact that $\|q_{k}^{\ast}\|_{0,\Gamma_{i}}$ is uniformly bounded implies there
    exist a subsequence, also denoted by $\{q_{k}^{\ast}\}$ and some $q^{\ast}\in Q_{\infty}$ such that
    \begin{equation}\label{thm_conv_constrained_min_med_proof01}
        q_{k}^{\ast}\rightarrow q^{\ast}\quad\mbox{weakly in
        }L^{2}(\Gamma_{i}).
    \end{equation}
    Then from Lemma \ref{lem_strong_convergence_aux01}, we know by
    extracting a subsequence with $m=k_{n}$ that
    \begin{equation}\label{thm_conv_constrained_min_med_proof02}
        u^{\ast}_{m}(q_{m}^{\ast})\rightarrow u_{\infty}(q^{\ast})\in
    V_{\infty}\quad\mbox{ in }L^{2}(\Gamma_{a}).
    \end{equation}
    On the other hand, for any $q\in Q_{\infty}$ there exists a
    sequence $\{q_{l}\}\subset\bigcup_{l\geq 0} V_{l,\Gamma_{i}}$ such that
    \begin{equation}\label{thm_conv_constrained_min_med_proof03}
        \lim_{l\rightarrow\infty}\|q_{l}-q\|_{0,\Gamma_{i}}=0,
    \end{equation}
    which, by Lemma \ref{lem_strong_convergence_aux02} and the trace theorem, implies
    \begin{equation}\label{thm_conv_constrained_min_med_proof04}
        \lim_{l\rightarrow\infty}\|u_{l}(q_{l})-z\|^{2}_{0,\Gamma_{a}}=\|u_{\infty}(q)-z\|^{2}_{0,\Gamma_{a}}.
    \end{equation}
    Choosing $k\geq l$ for sufficiently large $l$ and noting the whole sequence $\{q_{k}^{\ast}\}$ are minimizers of $\mathcal{J}(\cdot)$ over $\{V_{k,\Gamma_{i}}\}$, we can derive
    \begin{align*}
        \mathcal{J}(q_{k}^{\ast})\leq\mathcal{J}(q_{l})=\frac{1}{2}\|u_{l}(q_{l})-z\|_{0,\Gamma_{a}}^{2}+\frac{\beta}{2}\|q_{l}\|^{2}_{0,\Gamma_{i}}.
    \end{align*}
    Then a collection of \eqref{thm_conv_constrained_min_med_proof01}-\eqref{thm_conv_constrained_min_med_proof04} gives
    \begin{align*}
        \mathcal{J}(q^{\ast})&=\frac{1}{2}\|u_{\infty}(q^{\ast})-z\|_{0,\Gamma_{a}}^{2}+\frac{\beta}{2}\|q^{\ast}\|^{2}_{0,\Gamma_{i}}\\
                                      &\leq\lim_{m\rightarrow\infty}\frac{1}{2}\|u_{m}(q_{m}^{\ast})-z\|_{0,\Gamma_{a}}^{2}
                                       +\liminf_{m\rightarrow\infty}\frac{\beta}{2}\|q_{m}^{\ast}\|^{2}_{0,\Gamma_{i}}\\
                                      &\leq\liminf_{m\rightarrow\infty}\mathcal{J}(q_{m}^{\ast})\leq\limsup_{m\rightarrow\infty}\mathcal{J}(q_{m}^{\ast})
                                       \leq\limsup_{k\rightarrow\infty}\mathcal{J}(q_{k}^{\ast})\leq\limsup_{l\rightarrow\infty}\mathcal{J}(q_{l})
                                       =\mathcal{J}(q)\quad\forall~q\in Q_{\infty},
    \end{align*}
    which indicates that $q^{\ast}=q^{\ast}_{\infty}$ is the unique minimizer of the problem
    \eqref{constrained_min_med}-\eqref{vp_state_constraint_med}. Then the
    whole sequence $\{q_{k}^{\ast}\}$ converges weakly  to $q^{\ast}_{\infty}$.
    Moreover the choice $q=q^{\ast}$ in the above estimate yields
    equality $\displaystyle{\lim_{m\rightarrow\infty}}\mathcal{J}(q_{m}^{\ast})=\mathcal{J}(q^{\ast})=\inf\mathcal{J}(Q_{\infty})$
    and it follows that $\displaystyle{\lim_{k\rightarrow\infty}}\mathcal{J}(q_{k}^{\ast})=\inf\mathcal{J}(Q_{\infty})$ for the whole sequence $\{q_{k}^{\ast}\}$.
    Similarly, the strong convergence in \eqref{thm_conv_constrained_min_med_proof02} also holds true
    for the whole sequence $\{u^{\ast}_{k}(q_{k}^{\ast})\}$. These two facts
    guarantee that
    $
        \displaystyle{\lim_{k\rightarrow\infty}}\|q_{k}^{\ast}\|^{2}_{0,\Gamma_{i}}=\|q^{\ast}_{\infty}\|^{2}_{0,\Gamma_{i}},
    $
    which, along with the weak convergence, implies the strong convergence.
\end{proof}

Like the continuous case, after we introduce a Lagrangian multiplier
$p_{\infty}\in V_{\infty}$ to relax the constraint
\eqref{vp_state_constraint_med}, the minimization problem
\eqref{constrained_min_med} is recast as a saddle-point problem of
the following Lagrangian functional over $V_{\infty}\times
V_{\infty}\times Q_{\infty}$:
\[
    \mathcal{L}(u_{\infty},p_{\infty},q)=\frac{1}{2}\|u_{\infty}-z\|^{2}_{0,\Gamma_{a}}+\frac{\beta}{2}\|q\|^{2}_{0,\Gamma_{i}}
                                        -a(u_{\infty},p_{\infty})+(f,p_{\infty})+(\gamma
                                        u_{a},p_{\infty})_{\Gamma_{a}}-(q,p_{\infty})_{\Gamma_{i}}.
\]
The minimizer $q_{\infty}^{\ast}$ of \eqref{constrained_min_med} and
the related state $u_{\infty}^{\ast}$ are determined by the following system:
\begin{align}
    &a(u^{\ast}_{\infty},\phi)=(f,\phi)+(\gamma u_{a},\phi)_{\Gamma_{a}}-(q_{\infty}^{\ast},\phi)_{\Gamma_{i}}\quad\forall~\phi\in
    V_{\infty}\label{vp_state_med}\\
    &a(p_{\infty}^{\ast},v)=(u_{\infty}^{\ast}-z,v)_{\Gamma_{a}}\quad\forall~v\in V_{\infty},\label{vp_costate_med}\\
    & (\beta q_{\infty}^{\ast}-p_{\infty}^{\ast},w)_{\Gamma_{i}}=0\quad\forall~w\in Q_{\infty}.\label{gateaux_med}
\end{align}

Finally for the above system, we have the second main result of this
section.

\begin{thm}\label{thm_conv_vp_med} Let $\{V_{k}\times V_{k,\Gamma_{i}}\}$
be a sequence of discrete spaces generated by Algorithm
\ref{alg_afem_flux-recon}, then the sequence
$\{(u^{\ast}_{k},p^{\ast}_{k},q_{k}^{\ast})\}$ of discrete solutions
converges to
$(u_{\infty}^{\ast},p_{\infty}^{\ast},q_{\infty}^{\ast})$, the
solution of the problem \eqref{vp_state_med}-\eqref{gateaux_med},
in the following sense:
\begin{equation}\label{conv_vp_med}
    \|u_{k}^{\ast}-u_{\infty}^{\ast}\|_{1}\rightarrow 0,\quad
    \|p_{k}^{\ast}-p_{\infty}^{\ast}\|_{1}\rightarrow 0,\quad
    \|q_{k}^{\ast}-q_{\infty}^{\ast}\|_{0,\Gamma_{i}}\rightarrow
    0\quad\mbox{as}~k\rightarrow\infty.
\end{equation}
\end{thm}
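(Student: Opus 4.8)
The plan is to lean on Theorem \ref{thm_conv_constrained_min_med}, which already supplies the third convergence in \eqref{conv_vp_med}, namely $\|q_k^{\ast}-q_\infty^{\ast}\|_{0,\Gamma_i}\to 0$ for the whole sequence. I would first note that comparing \eqref{vp_state_constraint_med} (with $q=q_\infty^{\ast}$) against \eqref{vp_state_med}, and \eqref{vp_state_constraint_disc} (with $q_h=q_k^{\ast}$) against \eqref{vp_state_disc}, gives the identifications $u_\infty(q_\infty^{\ast})=u_\infty^{\ast}$ and $u_k(q_k^{\ast})=u_k^{\ast}$. Since $q_k^{\ast}\in V_{k,\Gamma_i}$ and $q_k^{\ast}\to q_\infty^{\ast}\in Q_\infty$ strongly in $L^{2}(\Gamma_i)$, Lemma \ref{lem_strong_convergence_aux02} applies with $q_k=q_k^{\ast}$ and $q^{\ast}=q_\infty^{\ast}$, and it yields at once $u_k^{\ast}=u_k(q_k^{\ast})\to u_\infty(q_\infty^{\ast})=u_\infty^{\ast}$ in $H^{1}(\Omega)$, which is the first assertion in \eqref{conv_vp_med}.

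The costate convergence requires a little more care, because the right-hand side of the discrete costate equation \eqref{vp_costate_disc} involves $u_k^{\ast}$, which varies with $k$. To isolate this effect I would introduce an auxiliary discrete costate $\tilde p_k\in V_k$ defined by
\[
    a(\tilde p_k,v)=(u_\infty^{\ast}-z,v)_{\Gamma_a}\qquad\forall\,v\in V_k .
\]
Since the data $u_\infty^{\ast}-z$ are now fixed, this is a conforming finite element approximation of \eqref{vp_costate_med}, so Cea's lemma gives
\[
    \|p_\infty^{\ast}-\tilde p_k\|_{1}\leq C\inf_{v\in V_k}\|p_\infty^{\ast}-v\|_{1},
\]
and the right-hand side tends to zero because $p_\infty^{\ast}\in V_\infty=\overline{\bigcup_{k\geq 0}V_k}$.

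It then remains to control $p_k^{\ast}-\tilde p_k\in V_k$. Subtracting the definition of $\tilde p_k$ from \eqref{vp_costate_disc} gives $a(p_k^{\ast}-\tilde p_k,v)=(u_k^{\ast}-u_\infty^{\ast},v)_{\Gamma_a}$ for all $v\in V_k$; choosing $v=p_k^{\ast}-\tilde p_k$ and using the trace theorem together with the equivalence of $\|\cdot\|_{a}$ and $\|\cdot\|_{1}$ produces
\[
    \|p_k^{\ast}-\tilde p_k\|_{1}\leq C\|u_k^{\ast}-u_\infty^{\ast}\|_{1}\longrightarrow 0
\]
by the state convergence just established. The triangle inequality $\|p_k^{\ast}-p_\infty^{\ast}\|_{1}\leq\|p_k^{\ast}-\tilde p_k\|_{1}+\|\tilde p_k-p_\infty^{\ast}\|_{1}$ then closes the argument. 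I do not anticipate a genuine obstacle at this stage: the substantive difficulty—the strong $L^{2}$-convergence of the adaptive fluxes, obtained by a nonlinear optimization argument—has already been settled in Theorem \ref{thm_conv_constrained_min_med}, and what remains is the linear stability-and-density reasoning above, the only mild subtlety being the introduction of the fixed-data auxiliary problem to absorb the moving right-hand side in the costate equation.
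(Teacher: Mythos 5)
Your proposal is correct and follows essentially the same route as the paper: the flux convergence from Theorem \ref{thm_conv_constrained_min_med}, the state convergence via Lemma \ref{lem_strong_convergence_aux02}, and the costate convergence via the identical fixed-data auxiliary problem $a(\widetilde{p}_{k},v)=(u_{\infty}^{\ast}-z,v)_{\Gamma_{a}}$ split by Cea's lemma and a stability estimate. No gaps.
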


\begin{proof}
    The third convergence follows directly from Theorem
    \ref{thm_conv_constrained_min_med}. Then by Lemma
    \ref{lem_strong_convergence_aux02} we obtain the first result. It
    remains to show the second one. We introduce an auxiliary
    problem:
    Find $\widetilde{p}_{k}\in V_{k}$ such that
    \begin{equation}\label{thm_conv_vp_med_proof01}
        (\alpha\boldsymbol{\nabla}\widetilde{p}_{k},\boldsymbol{\nabla}v)+(\gamma\widetilde{p}_{k},v)_{\Gamma_{a}}
        =(u^{\ast}_{\infty}-z,v)_{\Gamma_{a}}\quad\forall~v\in V_{k}.
    \end{equation}
    Combining \eqref{vp_costate_disc} and \eqref{thm_conv_vp_med_proof01} with
    $v=\widetilde{p}_{k}-p_{k}^{\ast}$ and using the trace theorem
    as well as the norm equivalence we obtain
    \begin{equation}\label{thm_conv_vp_med_proof02}
        \|\widetilde{p}_{k}-p_{k}^{\ast}\|_{1}\leq
        C\|u^{\ast}_{k}-u^{\ast}_{{\infty}}\|_{1}.
    \end{equation}
    On the other hand, it is not difficult to find that the problem
    \eqref{thm_conv_vp_med_proof01} is a discrete version of
    \eqref{vp_costate_med}. Hence the Cea's lemma gives
    \begin{equation}\label{thm_conv_vp_med_proof03}
        \|p_{\infty}^{\ast}-\widetilde{p}_{k}\|_{1}\leq C\inf_{v\in
        V_{k}}\|p_{\infty}^{\ast}-v\|_{1}.
    \end{equation}
    The desired result comes readily from
    \eqref{thm_conv_vp_med_proof02},
    \eqref{thm_conv_vp_med_proof03}, the first convergence in \eqref{conv_vp_med} and the construction of $V_{\infty}$.
\end{proof}

\begin{rem} As a matter of fact, the convergence results in Theorem
\ref{thm_conv_vp_med} have no connections with any particular
strategy adopted in the module \textsf{MARK} as in the case of
linear elliptic problems \cite{mor3} \cite{nsv} \cite{siebert}.
So other marking strategies work also here; see section 5.3 for
details.
\end{rem}

\section{Convergence}

In this section, we shall establish the convergence of Algorithm
\ref{alg_afem_flux-recon} in the following senses:
(1) the discrete solutions $\{(u_{k}^{\ast},p_{k}^{\ast},q_{k}^{\ast})\}$
converges strongly to the true solution of the problem
\eqref{vp_state}-\eqref{gateaux_cont};
(2) the error estimator $\eta_{k}$ converges to zero.
With some properties of
adaptively generated triangulations and the error estimator stated
in section 5.1, the proof of our main results is presented in section 5.2.
We will discuss the generalizations of the current arguments
to other marking strategies in section 5.3.

\subsection{Preliminaries}
We first introduce a convenient classification of all elements generated during
an adaptive algorithm. For each mesh $\mathcal{T}_{k}$, we define \cite{siebert}:
$$
    \mathcal{T}_{k}^{+}:=\bigcap_{l\geq k}\mathcal{T}_{l} \quad \mbox{and} \quad
    \mathcal{T}_{k}^{0}:=\mathcal{T}_{k}\setminus\mathcal{T}_{k}^{+}.
$$
So $\mathcal{T}_{k}^{+}$ consists of all
elements not refined after the $k$-th iteration and the sequence
$\{\mathcal{T}_{k}^{+}\}$ satisfies
$\mathcal{T}_{l}^{+}\subset\mathcal{T}_{k}^{+}$ for all $k> l$. On
the other hand, all elements in $\mathcal{T}_{k}^{0}$ are refined at
least once after the $k$-th iteration, that is to say for any
$T\in\mathcal{T}_{k}^{0}$, there exists $l\geq k$ such that
$T\in\mathcal{T}_{l}$ but
$T\in\hspace{-10pt}\slash~\mathcal{T}_{l+1}$. Correspondingly, the
domain $\Omega$ is split into two parts covered by
$\mathcal{T}_{k}^{+}$ and $\mathcal{T}_{k}^{0}$ respectively, i.e.
\[
    \bar{\Omega}=\Omega(\mathcal{T}_{k}^{+})\cup\Omega(\mathcal{T}_{k}^{0}):=\Omega_{k}^{+}\cup\Omega_{k}^{0}.
\]
We also define a mesh-size function
$h_{k}:\bar{\Omega}\rightarrow\mathbb{R}^{+}$ almost everywhere by
$h_{k}(x)=h_{T}$ for $x$ in the interior of an element
$T\in\mathcal{T}_{k}$ and $h_{k}(x)=h_{F}$ for $x$ in the relative
interior of a face $F\in\mathcal{F}_{k}$. It is clear that
the sequence $\{h_{k}\}$ given by Algorithm
\ref{alg_afem_flux-recon} strictly decreases on the region refined
by the newest vertex bisection. In fact, we have the following
observations (Corollary 3.3, \cite{siebert}).

\begin{lem}\label{lem_convergence_zero_mesh-size}
Let $\chi_{k}^{0}$ be the characteristic function of
$\Omega_{k}^{0}$, then the definition of $\mathcal{T}_{k}^{0}$ implies that
\begin{equation}\label{convergence_zero_mesh-size}
    \lim_{k\rightarrow\infty}\|h_{k}\chi^{0}_{k}\|_{\infty}
    =\lim_{k\rightarrow\infty}\|h_{k}\|_{\infty,\Omega_{k}^{0}}=0\,.
\end{equation}
\end{lem}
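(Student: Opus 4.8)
The plan is to establish \eqref{convergence_zero_mesh-size} as a purely combinatorial property of the mesh sequence $\{\mathcal{T}_{k}\}$ produced by the module \textsf{REFINE}; neither the estimator nor the underlying PDE plays any role. First I would note that faces are Lebesgue-null in $\mathbb{R}^{d}$, so the essential supremum defining $\|h_{k}\chi^{0}_{k}\|_{\infty}$ is attained on element interiors; hence both quantities in \eqref{convergence_zero_mesh-size} coincide with $\max_{T\in\mathcal{T}_{k}^{0}}h_{T}$ (with the usual convention $\max\emptyset=0$), and it suffices to prove $\max_{T\in\mathcal{T}_{k}^{0}}h_{T}\to0$.

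Two structural facts about newest vertex bisection carry the proof. \textbf{(i)} Each bisection splits an element $T$ into two children of half the measure, hence of diameter $\rho\,h_{T}$ with $\rho:=2^{-1/d}\in(0,1)$; thus an element obtained after $n$ bisections from its ancestor $T_{0}\in\mathcal{T}_{0}$ has diameter $\rho^{n}h_{T_{0}}$. Consequently, for each fixed $\varepsilon>0$ the collection
\[
  \mathcal{G}_{\varepsilon}:=\{T:\ T\in\mathcal{T}_{k}\ \text{for some}\ k,\ h_{T}\ge\varepsilon\}
\]
is \emph{finite}: every $T\in\mathcal{G}_{\varepsilon}$ sits at level at most $N(\varepsilon):=\lceil d\log_{2}(\max_{T_{0}\in\mathcal{T}_{0}}h_{T_{0}}/\varepsilon)\rceil$ in the binary refinement forest rooted at the finitely many elements of $\mathcal{T}_{0}$, and such a forest has only finitely many nodes up to a given level. \textbf{(ii)} Once an element is bisected it leaves the mesh forever: if $T\in\mathcal{T}_{l}$ but $T\notin\mathcal{T}_{l+1}$, then for every $m>l$ the region occupied by $T$ is tiled in $\mathcal{T}_{m}$ by proper sub-simplices of $T$, so $T\notin\mathcal{T}_{m}$. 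In particular, for each $T$ the index set $\{k:T\in\mathcal{T}_{k}\}$ is an interval, which is \emph{finite} whenever $T$ is ever bisected.

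Then I would argue by contradiction. If $\max_{T\in\mathcal{T}_{k}^{0}}h_{T}\not\to0$, there are $\varepsilon>0$, a strictly increasing sequence $k_{1}<k_{2}<\cdots$ and elements $T_{j}\in\mathcal{T}_{k_{j}}^{0}$ with $h_{T_{j}}\ge\varepsilon$. All $T_{j}$ lie in the finite set $\mathcal{G}_{\varepsilon}$, so by the pigeonhole principle some $T^{\star}\in\mathcal{G}_{\varepsilon}$ equals $T_{j}$ for infinitely many $j$. By the definition of $\mathcal{T}_{k}^{0}$, for each such $j$ we have $T^{\star}\in\mathcal{T}_{k_{j}}$ while $T^{\star}\notin\mathcal{T}_{l}$ for some $l\ge k_{j}$; hence $T^{\star}$ is eventually bisected and, by fact (ii), $\{k:T^{\star}\in\mathcal{T}_{k}\}$ is a finite interval which nonetheless contains the unboundedly many indices $k_{j}$ — a contradiction. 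This yields $\max_{T\in\mathcal{T}_{k}^{0}}h_{T}\to0$, i.e. \eqref{convergence_zero_mesh-size}.

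The only genuinely delicate ingredients are facts (i) and (ii); both are standard consequences of the binary, measure-halving structure of newest vertex bisection applied to a fixed initial conforming triangulation (see \cite{koss,maubach,mitc,stev1,traxler,ver}), and the remainder is elementary counting. I expect fact (ii) — the non-recurrence of bisected elements, equivalently the nestedness of the bisection hierarchy — to be the point that must be stated with care, since it is precisely what converts the statement ``$T$ is refined at some step after $k$'' into an \emph{a priori} bound on the steps at which $T$ can appear in the mesh at all.
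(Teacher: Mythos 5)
Your argument is correct, and it is worth noting that the paper does not actually prove this lemma at all: it is quoted verbatim as Corollary~3.3 of \cite{siebert}, so the ``paper's proof'' is a citation. Siebert's route is slightly different in flavour: he first shows that the monotonically decreasing mesh-size functions $h_{k}$ converge in $L^{\infty}(\Omega)$ to a limiting mesh-size function $h_{\infty}$, and then deduces $\|h_{k}\chi_{k}^{0}\|_{\infty}\to 0$ from the uniform reduction factor on refined elements (on $\Omega_{k}^{0}$ one has $h_{\infty}\le \rho\, h_{k}$ with $\rho=2^{-1/d}$, so $h_{k}\chi_{k}^{0}\le (1-\rho)^{-1}(h_{k}-h_{\infty})$). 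Your proof replaces the limit function $h_{\infty}$ by a direct pigeonhole argument on the finite set $\mathcal{G}_{\varepsilon}$ of large elements; both proofs rest on exactly the same two structural facts you isolate, namely the fixed measure-halving factor of bisection (which, with the paper's convention $h_{T}=|T|^{1/d}$, gives precisely $\rho=2^{-1/d}$, so no appeal to shape regularity is needed) and the nestedness of the refinement hierarchy, which makes $\{k:\,T\in\mathcal{T}_{k}\}$ a finite interval for any element that is ever bisected. Your version is arguably more elementary since it avoids introducing $h_{\infty}$, at the cost of being a pure convergence statement rather than the quantitative bound $\|h_{k}\chi_{k}^{0}\|_{\infty}\le C\|h_{k}-h_{\infty}\|_{\infty}$ that Siebert's formulation provides; for the use made of the lemma in this paper (only the vanishing limit is needed, e.g.\ in \eqref{lem_convergence_zero_marking_proof01} and \eqref{lem_weak*_convergence_residual_proof05}), that loss is immaterial. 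The reduction of the essential supremum to $\max_{T\in\mathcal{T}_{k}^{0}}h_{T}$ via the nullity of the face skeleton is also handled correctly.
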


In the subsequent convergence analysis, the sum of
$\eta_{T,k,1}$ and $\eta_{T,k,2}$ over $\mathcal{T}_{k}$ will be split by
$\mathcal{T}_{k}^{0}$ and $\mathcal{T}_{k}^{+}$, and with the help of
Lemma \ref{convergence_zero_mesh-size} and the local approximation
properties of a classic nodal interpolation operator \cite{ciarlet}, we are able to
control the relevant residual in $\Omega_{k}^{0}$
(see the proof of Lemma \ref{lem_weak*_convergence_residual} below).
For the remaining part, we have to resort
to the marking strategy \eqref{maximum_marking}, which implies that
the maximal error indicator in
$\mathcal{T}_{k}\setminus\mathcal{M}_{k}$ is dominated by the
maximal error indicator in $\mathcal{M}_{k}$. Therefore it is
necessary to study only the convergence behavior of the latter.

\begin{lem}\label{lem_convergence_zero_marking} Let $\{\mathcal{T}_{k},
V_{k}\times
V_{k,\Gamma_{i}},(u^{\ast}_{k},p^{\ast}_{k},q^{\ast}_{k})\}$ be the
sequence of meshes, finite element spaces and discrete solutions
produced by Algorithm \ref{alg_afem_flux-recon} and $\mathcal{M}_{k}$ the set  of
marked elements  given by
\eqref{maximum_marking}. Then the following convergence holds
    \begin{equation}\label{convergence_zero_marking}
    \lim_{k\rightarrow\infty}\max_{T\in\mathcal{M}_{k}}\eta_{T,k}(u^{\ast}_{k},p^{\ast}_{k},q^{\ast}_{k},f,u_{a},z)=0.
    \end{equation}
\end{lem}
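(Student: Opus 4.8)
The plan is to establish \eqref{convergence_zero_marking} by combining the strong convergence of the discrete triplet (Theorem \ref{thm_conv_vp_med}) with the decay of the mesh-size function on the refined region (Lemma \ref{lem_convergence_zero_mesh-size}), exploiting the crucial fact that every marked element $T\in\mathcal{M}_{k}$ belongs to $\mathcal{T}_{k}^{0}$, since a marked element is refined in the very next step and therefore disappears from all later meshes. First I would fix $k$ and an arbitrary $T\in\mathcal{M}_{k}$ and write out $\eta_{T,k}^{2}(u^{\ast}_{k},p^{\ast}_{k},q^{\ast}_{k},f,u_{a},z)$ as the sum of the two indicators $\eta_{T,k,1}^{2}$ and $\eta_{T,k,2}^{2}$, each of which decomposes into an element-residual term scaled by $h_{T}^{2}$ and face-residual terms scaled by $h_{F}$.

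Next I would bound each piece separately. For the volume residuals, $h_{T}^{2}\|R_{T,1}(u^{\ast}_{k})\|_{0,T}^{2} = h_{T}^{2}\|f+\boldsymbol{\nabla}\cdot(\alpha\boldsymbol{\nabla}u^{\ast}_{k})\|_{0,T}^{2}$; since $u^{\ast}_{k}$ is piecewise linear, the divergence term vanishes elementwise, so this equals $h_{T}^{2}\|f\|_{0,T}^{2}\le\|h_{k}\chi_{k}^{0}\|_{\infty}^{2}\|f\|_{0,\Omega}^{2}$ (using $T\subset\Omega_{k}^{0}$), which tends to zero by Lemma \ref{lem_convergence_zero_mesh-size}; the term $h_{T}^{2}\|R_{T,2}(p^{\ast}_{k})\|_{0,T}^{2}$ vanishes outright for the same piecewise-linearity reason. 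For the face residuals I would use the standard trace/inverse estimate $h_{F}\|\,\cdot\,\|_{0,F}^{2}\le C(\|\,\cdot\,\|_{0,\omega_{F}}^{2}+h_{F}^{2}|\,\cdot\,|_{1,\omega_{F}}^{2})$ applied to the jump and boundary terms, which after using the $H^{1}$-stability of $u^{\ast}_{k},p^{\ast}_{k}$ (uniformly bounded by Theorem \ref{thm_conv_vp_med}) and the uniform $L^{2}(\Gamma_{i})$-bound on $q^{\ast}_{k}$, together with the trace theorem for $u_{a},z$, yields a bound of the form $C\|h_{k}\|_{\infty,\omega_{T}}$ times fixed data norms plus uniformly bounded solution norms, with $\omega_{T}\subset\Omega_{k}^{0}$ since $T$ and all its neighbours sit in the refined region. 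Hence $\eta_{T,k}^{2}\le C\,\|h_{k}\chi_{k}^{0}\|_{\infty}\,\big(1+\|f\|_{0,\Omega}^{2}+\|u_{a}\|_{0,\Gamma_{a}}^{2}+\|z\|_{0,\Gamma_{a}}^{2}\big)$ with $C$ independent of $k$ and of the particular $T\in\mathcal{M}_{k}$.

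Finally, taking the maximum over $T\in\mathcal{M}_{k}$ on the right-hand side changes nothing since the bound is uniform, and letting $k\to\infty$ forces the left side to zero by Lemma \ref{lem_convergence_zero_mesh-size}; taking square roots gives \eqref{convergence_zero_marking}. The main obstacle I anticipate is handling the boundary face residuals on $\Gamma_{i}$, namely $h_{F}\|-q^{\ast}_{k}-\alpha\boldsymbol{\nabla}u^{\ast}_{k}\cdot\boldsymbol{n}_{F}\|_{0,F}^{2}$: unlike the interior jumps, this term does not obviously shrink just from a uniform bound on $\|q^{\ast}_{k}\|_{0,\Gamma_{i}}$ unless one is careful to localize. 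The point is that $F\in\mathcal{F}_{k}(\Gamma_{i})$ with $\omega_{F}\in\mathcal{M}_{k}\subset\mathcal{T}_{k}^{0}$, so the scaling weight $h_{F}=h_{k}|_{F}$ itself is controlled by $\|h_{k}\chi_{k}^{0}\|_{\infty}$, while the $L^{2}(F)$-norms of $q^{\ast}_{k}$ and of the (piecewise constant) normal derivative are absorbed by the uniform bounds; this is exactly why the argument must go through the marked/refined region rather than through any oscillation or data-approximation estimate.
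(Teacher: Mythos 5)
Your overall strategy (exploit $\mathcal{M}_{k}\subset\mathcal{T}_{k}^{0}$ together with the vanishing mesh size on the refined region) is the right starting point and matches the paper's, and your treatment of the volume residuals and of the pure data/flux contributions ($h_{T}^{2}\|f\|_{0,T}^{2}$, $h_{F}\|q_{k}^{\ast}\|_{0,F}^{2}$, $h_{F}\|u_{a}\|_{0,F}^{2}$, $h_{F}\|z\|_{0,F}^{2}$) is correct. But there is a genuine gap in the face terms involving gradients. For an interior jump the scaled trace/inverse estimate gives $h_{F}\|[\alpha\boldsymbol{\nabla}u_{k}^{\ast}\cdot\boldsymbol{n}_{F}]\|_{0,F}^{2}\leq C\|\boldsymbol{\nabla}u_{k}^{\ast}\|_{0,\omega_{F}}^{2}$: the factor $h_{F}$ is exactly consumed by the inverse estimate and \emph{no} power of the mesh size survives. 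The same happens for the normal-derivative parts of the boundary residuals; in particular your claim that the $L^{2}(F)$-norm of the (piecewise constant) normal derivative is "absorbed by the uniform bounds" is not true, since all one has is $\|\boldsymbol{\nabla}u_{k}^{\ast}\|_{0,F}^{2}\leq Ch_{F}^{-1}\|\boldsymbol{\nabla}u_{k}^{\ast}\|_{0,T}^{2}$, which degenerates as $h_{F}\to 0$. Consequently your final bound $\eta_{T,k}^{2}\leq C\,\|h_{k}\chi_{k}^{0}\|_{\infty}\,(\cdots)$ does not hold: what the estimates actually yield is $\eta_{T,k}^{2}\leq C(\|u_{k}^{\ast}\|_{1,D_{T}}^{2}+\|p_{k}^{\ast}\|_{1,D_{T}}^{2}+\mbox{terms with an }h\mbox{ factor})$, and the first two terms carry no small factor, so uniform $H^{1}$-boundedness alone cannot make them vanish.

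The missing idea --- and the one the paper uses --- is to combine the localization with the \emph{strong convergence} of Theorem \ref{thm_conv_vp_med} rather than mere stability: write $\|u_{k}^{\ast}\|_{1,D_{T}}^{2}\leq 2\|u_{k}^{\ast}-u_{\infty}^{\ast}\|_{1}^{2}+2\|u_{\infty}^{\ast}\|_{1,D_{T}}^{2}$ (and likewise for $p_{k}^{\ast}$ and $q_{k}^{\ast}$). The first term tends to zero by \eqref{conv_vp_med}; the second tends to zero because $u_{\infty}^{\ast}$ is a \emph{fixed} $H^{1}$ function while $|D_{T}|\leq C|T|\leq C\|h_{k}^{2}\|_{\infty,\Omega_{k}^{0}}\to 0$ by local quasi-uniformity and Lemma \ref{lem_convergence_zero_mesh-size}, so the absolute continuity of $\|\cdot\|_{1}$ and $\|\cdot\|_{0,\Gamma}$ with respect to the Lebesgue measure applies. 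With this repair the rest of your argument goes through; note that it is then essential to invoke the convergence statement of Theorem \ref{thm_conv_vp_med}, not only the uniform boundedness you cite.
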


\begin{proof}
    Let $\widetilde{T}$ be the element where the error indicator
    attains the maximum among $\mathcal{M}_{k}$. As $\widetilde{T}\in\mathcal{M}_{k}\subset\mathcal{T}_{k}^{0}$,
    the local quasi-uniformity of $\mathcal{T}_{k}$ and Lemma
    \ref{lem_convergence_zero_mesh-size} tell that
    \begin{equation}\label{lem_convergence_zero_marking_proof01}
        |D_{\widetilde{T}}|\leq C|\widetilde{T}|\leq
        C\|h_{k}^{2}\|_{\infty,\Omega^{0}_{k}}\rightarrow
        0\quad\mbox{as~}k\rightarrow\infty.
    \end{equation}
    By means of the inverse estimate and the triangle inequality, we can
    estimate the error indicator
    $\eta_{\widetilde{T},k}:=(\eta^{2}_{\widetilde{T},k,1}+\eta^{2}_{\widetilde{T},k,2})^{1/2}$
    as follows
    \begin{eqnarray*}
        \eta^{2}_{\widetilde{T},k,1}(u^{\ast}_{k},q^{\ast}_{k},f,u_{a})
        &\leq&
        C(\|u^{\ast}_{k}\|^{2}_{1,D_{\widetilde{T}}}+h_{\widetilde{T}}\|q^{\ast}_{k}\|^{2}_{0,\partial\widetilde{T}\cap\Gamma_{i}}
        +h^{2}_{\widetilde{T}}\|f\|^{2}_{0,\widetilde{T}}+h_{\widetilde{T}}\|u_{a}\|^{2}_{0,\partial\widetilde{T}\cap\Gamma_{a}})\\
        &\leq&C(\|u^{\ast}_{k}-u^{\ast}_{\infty}\|^{2}_{1}+\|u^{\ast}_{\infty}\|^{2}_{1,D_{\widetilde{T}}}+\|q^{\ast}_{k}-q^{\ast}_{\infty}\|^{2}_{0,\Gamma_{i}}+h_{\widetilde{T}}\|q^{\ast}_{\infty}\|^{2}_{0,\partial\widetilde{T}\cap\Gamma_{i}}\\
        &&\qquad+h_{\widetilde{T}}^{2}\|f\|^{2}_{0,\widetilde{T}}+h_{\widetilde{T}}\|u_{a}\|^{2}_{0,\partial\widetilde{T}\cap\Gamma_{a}})\,, \\
        \eta^{2}_{\widetilde{T},k,2}(p^{\ast}_{k},u^{\ast}_{k},z)&\leq&
        C(\|p^{\ast}_{k}\|^{2}_{1,D_{\widetilde{T}}}+\|u^{\ast}_{k}\|^{2}_{1,D_{\widetilde{T}}}+h_{\widetilde{T}}\|z\|^{2}_{0,\partial\widetilde{T}\cap\Gamma_{a}})\\
        &\leq&
        C(\|p^{\ast}_{k}-p^{\ast}_{\infty}\|^{2}_{1}+\|p^{\ast}_{\infty}\|^{2}_{1,D_{\widetilde{T}}}+\|u^{\ast}_{k}-u^{\ast}_{\infty}\|^{2}_{1}+\|u^{\ast}_{\infty}\|^{2}_{1,D_{\widetilde{T}}}+h_{\widetilde{T}}\|z\|^{2}_{0,\partial\widetilde{T}\cap\Gamma_{a}}).
\end{eqnarray*}
    Now the result follows from \eqref{conv_vp_med}, \eqref{lem_convergence_zero_marking_proof01} and the absolute continuity of $\|\cdot\|_{1}$ and $\|\cdot\|_{0,\Gamma}$ with respect to the Lebesgue measure.
\end{proof}

\begin{rem}\label{rem_stability_error-indicator}
By inverse estimates we can deduce the following stability estimates
for any $T\in\mathcal{T}_{k}$:
\begin{eqnarray}
    \eta_{T,k,1}(u_{k}^{\ast},q_{k}^{\ast},f,u_{a})
    &\leq& C(\|u^{\ast}_{k}\|_{1,D_{T}}+\|q^{\ast}_{k}\|_{0,\partial T\cap\Gamma_{i}}+\|f\|_{0,T}+\|u_{a}\|_{0,\partial
    T\cap\Gamma_{a}}), \label{stability_error-indicator1}\\
    \eta_{T,k,2}(p_{k}^{\ast},u_{k}^{\ast},z)
    &\leq& C(\|p^{\ast}_{k}\|_{1,D_{T}}+\|u^{\ast}_{k}\|_{1,D_{T}}+\|z\|_{0,\partial
    T\cap\Gamma_{a}}). \label{stability_error-indicator2}
\end{eqnarray}
\end{rem}

\begin{rem}
    From the proof of Lemma \ref{lem_convergence_zero_marking}, we know that the maximum strategy \eqref{maximum_marking} in the module \textsf{MARK} is not utilized. Therefore this lemma is valid also for other markings.
\end{rem}

\subsection{Main results}\label{sec:main}

Now we turn our attention to the main results of this work. It is not difficult to know that once we can prove
the solution triplet $(u^{\ast}_{\infty}, p^{\ast}_{\infty}, q^{\ast}_{\infty})$  to the system \eqref{vp_state_med}-\eqref{gateaux_med}
is the exact solution triplet $(u^{\ast}, p^{\ast}, q^{\ast})$ to the system \eqref{vp_state}-\eqref{gateaux_cont}
in some appropriate
norm, then our expected first convergence result,
namely the sequence of discrete solutions $\{(u_{k}^{\ast},p_{k}^{\ast},q_{k}^{\ast})\}$ generated by Algorithm \ref{alg_afem_flux-recon}
converges strongly to the true solution of the problem
\eqref{vp_state}-\eqref{gateaux_cont}, will follow immediately from
Theorem \ref{thm_conv_vp_med}.
To do so, we shall first show the two residuals with respect to $u^{\ast}_{k}$ as well as $p^{\ast}_{k}$ have
weak vanishing limits for $u^{\ast}_{\infty}$ and $p^{\ast}_{\infty}$ (see Lemmas \ref{lem_weak*_convergence_residual} and \ref{lem_zero_residual_med}).
It is worth noting that compared with the case of the direct boundary value problems,
the inverse problem under consideration involves a major difficulty, i.e.,
$u^{\ast}$ and $u^{\ast}_{\infty}$ are determined by different fluxes $q^{\ast}$ and $q^{\ast}_{\infty}$ respectively.
To overcome the difficulty,
we define an auxiliary pair $(u(q_{\infty}^{\ast}),p(q_{\infty}^{\ast}))$ through
\eqref{vp_state}-\eqref{vp_costate} with $q^{\ast}$ replaced by $q^{\ast}_{\infty}$.
Then we will show that the pair $(u(q_{\infty}^{\ast}),p(q_{\infty}^{\ast}))$ is the same as
the limiting pair $(u_{\infty}^{\ast},p_{\infty}^{\ast})$.

As stated above, the first two residuals with
respect to $u^{\ast}_{k}(q_{k}^{\ast})$ and
$p^{\ast}_{k}(q_{k}^{\ast})$ are defined by
\[
    <\mathcal{R}(u_{k}^{\ast}),\phi>:=(f,\phi)+(\gamma
    u_{a},\phi)_{\Gamma_{a}}-(q_{k}^{\ast},\phi)_{\Gamma_{i}}-a(u_{k}^{\ast},\phi)\quad\forall~\phi\in
    H^{1}(\Omega),
\]
\[
    <\mathcal{R}(p_{k}^{\ast}),v>:=(u_{k}^{\ast}-z,v)_{\Gamma_{a}}-a(p_{k}^{\ast},v)\quad\forall~v\in
    H^{1}(\Omega).
\]

Since $\{q_{k}^{\ast}\}$ is a converging sequence of
minimizers by Theorem \ref{lem_strong_convergence_aux01},
it is uniformly bounded in $L^{2}(\Gamma_{i})$, so are
$\{u_{k}^{\ast}\}$ and $\{p_{k}^{\ast}\}$  in $H^1(\Omega)$
by means of \eqref{vp_state_disc} and
\eqref{vp_costate_disc}. Thus, we know
$\{\mathcal{R}(u_{k}^{\ast})\}$ and $\{\mathcal{R}(p_{k}^{\ast})\}$
are two sequences of uniformly bounded linear functionals in
$H^{1}(\Omega)'$, namely there exist two constants independent of
$k$ such that
\begin{equation}\label{residual_uni-bounded}
    \|\mathcal{R}(u_{k}^{\ast})\|_{H^{1}(\Omega)'}\leq C_{un1},\quad
    \|\mathcal{R}(p_{k}^{\ast})\|_{H^{1}(\Omega)'}\leq C_{un2}.
\end{equation}
In addition, we can easily observe from \eqref{vp_state_disc}
and \eqref{vp_costate_disc} that
\begin{equation}\label{residual_orthogonality}
    <\mathcal{R}(u_{k}^{\ast}),v>=0 \quad \m{and} \q <\mathcal{R}(p_{k}^{\ast}),v>=0\quad\forall~v\in
    V_{k}.
\end{equation}

Using these relations, we can establish the following weak convergence.

\begin{lem}\label{lem_weak*_convergence_residual} The sequence
$\{(u^{\ast}_{k},p^{\ast}_{k},q^{\ast}_{k})\}$ produced by Algorithm
\ref{alg_afem_flux-recon} satisfies
\begin{equation}\label{weak*_convergence_residual}
    \lim_{k\rightarrow\infty}<\mathcal{R}(u_{k}^{\ast}),\phi>=0,\quad
     \lim_{k\rightarrow\infty}<\mathcal{R}(p_{k}^{\ast}),\phi>=0\quad\forall~\phi\in
    H^{1}(\Omega)\,.
\end{equation}
\end{lem}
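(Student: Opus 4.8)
The plan is to exploit the Galerkin orthogonality \eqref{residual_orthogonality} together with the two structural properties of the adaptively generated meshes in Lemmas \ref{lem_convergence_zero_mesh-size} and \ref{lem_convergence_zero_marking}, in the spirit of the arguments in \cite{siebert} \cite{mor3}. Fix $\phi\in H^{1}(\Omega)$ and let $I_{k}\phi\in V_{k}$ be a Cl\'ement/Scott--Zhang quasi-interpolant, which is $H^{1}$-stable and satisfies $\|\phi-I_{k}\phi\|_{0,T}\leq Ch_{T}|\phi|_{1,\omega_{T}}$ and $\|\phi-I_{k}\phi\|_{0,F}\leq Ch_{F}^{1/2}|\phi|_{1,\omega_{F}}$ for every $T\in\mathcal{T}_{k}$ and $F\subset\partial T$. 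Since $u_{k}^{\ast}$ is piecewise linear we have $R_{T,1}(u_{k}^{\ast})=f$ on each $T$, so by \eqref{residual_orthogonality} and elementwise integration by parts
\[
    <\mathcal{R}(u_{k}^{\ast}),\phi>\;=\;<\mathcal{R}(u_{k}^{\ast}),\phi-I_{k}\phi>\;=\;\sum_{T\in\mathcal{T}_{k}}(f,\phi-I_{k}\phi)_{T}+\sum_{F\in\mathcal{F}_{k}}\sigma_{F}(J_{F,1}(u_{k}^{\ast},q_{k}^{\ast}),\phi-I_{k}\phi)_{F}
\]
with $\sigma_{F}\in\{-1,1\}$. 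Using the interpolation bounds and $h_{T}\|R_{T,1}\|_{0,T}\leq\eta_{T,k,1}$, $h_{F}^{1/2}\|J_{F,1}\|_{0,F}\leq\eta_{T,k,1}$ for $F\subset\partial T$, a Cauchy--Schwarz step and the finite overlap of $\{D_{T}\}$ give
\[
    |<\mathcal{R}(u_{k}^{\ast}),\phi>|\;\leq\;C\,\Big(\sum_{T\in\mathcal{T}_{k}^{0}}\eta_{T,k,1}^{2}\Big)^{1/2}|\phi|_{1}\;+\;C\,\Big(\sum_{T\in\mathcal{T}_{k}^{+}}\eta_{T,k,1}^{2}\Big)^{1/2}|\phi|_{1}.
\]
Thus the task reduces to showing that both the ``refined-region'' sum over $\mathcal{T}_{k}^{0}$ and the ``persistent-region'' sum over $\mathcal{T}_{k}^{+}$ tend to zero.

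For the $\mathcal{T}_{k}^{0}$-sum I would use that each element of $\mathcal{T}_{k}^{0}$ and each of its faces has diameter $\leq\|h_{k}\chi_{k}^{0}\|_{\infty}\to 0$ by Lemma \ref{lem_convergence_zero_mesh-size}. Then $\sum_{T\in\mathcal{T}_{k}^{0}}h_{T}^{2}\|f\|_{0,T}^{2}$ and the data parts of the boundary-face residuals on $\Gamma_{a}$, $\Gamma_{i}$ are bounded by $C\|h_{k}\chi_{k}^{0}\|_{\infty}$ times fixed or uniformly bounded norms of $f,u_{a},z,q_{k}^{\ast}$. The delicate terms are the interior jumps $h_{F}\|[\alpha\boldsymbol{\nabla}u_{k}^{\ast}\cdot\boldsymbol{n}_{F}]\|_{0,F}^{2}$, which carry no mesh smallness: here I would insert the decomposition $u_{k}^{\ast}=(u_{k}^{\ast}-u_{\infty}^{\ast})+(u_{\infty}^{\ast}-\widetilde{u})+\widetilde{u}$, where $\widetilde{u}\in V_{m}$ is chosen with $\|u_{\infty}^{\ast}-\widetilde{u}\|_{1}<\varepsilon$ (density of $\bigcup_{l}V_{l}$ in $V_{\infty}$). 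The first two pieces contribute at most $C\|u_{k}^{\ast}-u_{\infty}^{\ast}\|_{1}^{2}+C\varepsilon^{2}$, and $\|u_{k}^{\ast}-u_{\infty}^{\ast}\|_{1}\to 0$ by Theorem \ref{thm_conv_vp_med}; the jumps of $\widetilde{u}$ are supported on the fixed skeleton of $\mathcal{T}_{m}$, so the $\widetilde{u}$-contribution over faces of $\mathcal{T}_{k}^{0}$-elements is $\leq C(\widetilde{u})\|h_{k}\chi_{k}^{0}\|_{\infty}\to 0$. Letting $k\to\infty$ and then $\varepsilon\to 0$ yields $\sum_{T\in\mathcal{T}_{k}^{0}}\eta_{T,k,1}^{2}\to 0$; the $\Gamma_{a}$- and $\Gamma_{i}$-parts of the face residuals are handled by the same three-term splitting (using $q_{k}^{\ast}\to q_{\infty}^{\ast}$ in $L^{2}(\Gamma_{i})$).

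For the $\mathcal{T}_{k}^{+}$-sum, any $T\in\mathcal{T}_{k}^{+}$ is never refined, hence never marked, in any step $l\geq k$, so \eqref{maximum_marking} gives $\eta_{T,l}\leq\theta\max_{T'\in\mathcal{M}_{l}}\eta_{T',l}$, which tends to $0$ as $l\to\infty$ by Lemma \ref{lem_convergence_zero_marking}. Since $h_{T}^{2}\|f\|_{0,T}^{2}\leq\eta_{T,l}^{2}$ is independent of $l$, this forces $f\equiv 0$ on $\Omega_{k}^{+}$; and because the mesh is frozen on $\Omega_{k}^{+}$, the convergence $u_{l}^{\ast}\to u_{\infty}^{\ast}$ there takes place in a finite-dimensional space, so the interior jumps of $u_{l}^{\ast}$ across faces interior to $\Omega_{k}^{+}$ converge and are therefore forced to vanish as well. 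To pass from these pointwise-in-$T$ statements to the vanishing of the whole sum I would diagonalise over the nested family $\{\mathcal{T}_{m}^{+}\}$: the finitely many terms in $\mathcal{T}_{m}^{+}$ each go to $0$ as $k\to\infty$, while the ``new'' persistent elements $\mathcal{T}_{k}^{+}\setminus\mathcal{T}_{m}^{+}$ satisfy $\mathcal{T}_{k}^{+}\setminus\mathcal{T}_{m}^{+}\subset\Omega_{m}^{0}$ and thus have diameter $\leq\|h_{m}\chi_{m}^{0}\|_{\infty}$, so they are controlled by the refined-region estimate of the previous paragraph with $m$ in place of $k$. This gives $\sum_{T\in\mathcal{T}_{k}^{+}}\eta_{T,k,1}^{2}\to 0$, and altogether $<\mathcal{R}(u_{k}^{\ast}),\phi>\to 0$.

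The statement for $<\mathcal{R}(p_{k}^{\ast}),\phi>$ follows by the identical scheme, now with $R_{T,2}(p_{k}^{\ast})=0$ (again piecewise linear), the jumps $J_{F,2}$ in place of $J_{F,1}$, $p_{k}^{\ast}\to p_{\infty}^{\ast}$, and the additional use of $u_{k}^{\ast}\to u_{\infty}^{\ast}$, which enters $J_{F,2}$ on $\Gamma_{a}$. The hard part throughout is exactly the face-jump contributions: unlike the volume residuals they possess no intrinsic smallness under mesh refinement, and the only way I see to make them vanish is precisely this coupling of the strong $H^{1}$-convergence of $(u_{k}^{\ast},p_{k}^{\ast})$ (Theorem \ref{thm_conv_vp_med}) with the mesh decay on $\Omega_{k}^{0}$ (Lemma \ref{lem_convergence_zero_mesh-size}) and the uniform indicator decay on never-refined elements (Lemma \ref{lem_convergence_zero_marking}), the density step serving to relocate the irreducible jumps onto a fixed skeleton where they are then killed by $\|h_{k}\chi_{k}^{0}\|_{\infty}$.
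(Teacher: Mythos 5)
Your proposal is correct in its overall strategy, but it takes a genuinely different route from the paper's proof, and the difference is worth spelling out. The paper tests the residual with a \emph{smooth} function $\psi\in C^{\infty}(\bar{\Omega})$ and exploits the second--order interpolation estimate $\|\psi-I_{k}\psi\|_{1,\Omega_{l}^{\ast}}\leq C\|h_{l}\|_{\infty,\Omega_{l}^{\ast}}\|\psi\|_{2}$: on the refined region the smallness comes entirely from the interpolation error, so the estimator there only needs to be \emph{uniformly bounded} (via \eqref{stability_error-indicator1}--\eqref{stability_error-indicator2}), and no analysis of the jump terms is required; the result is then transferred to all $\phi\in H^{1}(\Omega)$ by density together with the uniform bound \eqref{residual_uni-bounded}. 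You instead work directly with $\phi\in H^{1}(\Omega)$, where the first--order interpolation estimate yields only $|\langle\mathcal{R}(u_{k}^{\ast}),\phi\rangle|\leq C\eta_{k,1}|\phi|_{1}$, and you are therefore forced to prove that the estimator sum itself vanishes on the refined region. Your mechanism for this --- splitting $u_{k}^{\ast}$ against a fixed $\tilde{u}\in V_{m_{0}}$ close to $u_{\infty}^{\ast}$, killing the $\tilde{u}$--jumps by the mesh decay of Lemma \ref{lem_convergence_zero_mesh-size} on the fixed skeleton of $\mathcal{T}_{m_{0}}$, and absorbing the rest into $\|u_{k}^{\ast}-\tilde{u}\|_{1}$ via a discrete trace inequality --- does work, and in fact proves more than the lemma: combined with your (correct) treatment of $\mathcal{T}_{m}^{+}$ via Lemma \ref{lem_convergence_zero_marking}, it essentially establishes $\eta_{k}\to 0$ directly, without the efficiency bound \eqref{local_efficiency} used in the paper's Theorem \ref{thm_convergence_estimator}. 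The price is a three--index limiting argument ($\varepsilon$, then $m$, then $k$) where the paper needs only two, and the treatment of both parts of the boundary face residuals by the same splitting.

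Two points need repair before this is a proof. First, as written you bound the jump contributions of the pieces $u_{k}^{\ast}-u_{\infty}^{\ast}$ and $u_{\infty}^{\ast}-\tilde{u}$ separately; but $u_{\infty}^{\ast}\in V_{\infty}\subset H^{1}(\Omega)$ is not piecewise polynomial on $\mathcal{T}_{k}$, so $[\alpha\boldsymbol{\nabla}(u_{\infty}^{\ast}-\tilde{u})\cdot\boldsymbol{n}_{F}]$ is not defined in $L^{2}(F)$, and the inverse--trace inequality you need is only valid for discrete functions. You must group the first two pieces into $u_{k}^{\ast}-\tilde{u}\in V_{k}$ (for $k\geq m_{0}$), apply the discrete trace inequality to that, and only then use the triangle inequality $\|u_{k}^{\ast}-\tilde{u}\|_{1}\leq\|u_{k}^{\ast}-u_{\infty}^{\ast}\|_{1}+\varepsilon$. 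Second, in the $\mathcal{T}_{k}^{+}$ paragraph the detour through ``$f\equiv 0$ on $\Omega_{k}^{+}$'' and the forced vanishing of frozen jumps is unnecessary (and the inequality $\eta_{T,l}\leq\theta\max_{T'\in\mathcal{M}_{l}}\eta_{T',l}$ is not what \eqref{maximum_marking} gives --- the correct chain is $\eta_{T,l}\leq\max_{\mathcal{T}_{l}\setminus\mathcal{M}_{l}}\eta\leq\max_{\mathcal{M}_{l}}\eta$, which requires the element of maximal indicator to be marked): for fixed $m$ the set $\mathcal{T}_{m}^{+}$ is finite and $\max_{T\in\mathcal{T}_{m}^{+}}\eta_{T,k}\to 0$ by Lemma \ref{lem_convergence_zero_marking} and $\mathcal{T}_{m}^{+}\cap\mathcal{M}_{k}=\emptyset$, so the sum over $\mathcal{T}_{m}^{+}$ vanishes at once, exactly as in \eqref{lem_weak*_convergence_residual_proof06}.
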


\begin{proof}
We prove only the first result by borrowing some techniques from \cite{siebert},
as the second convergence can be done in the same manner. We easily see that
$\mathcal{T}_{l}^{+}\subset\mathcal{T}_{k}^{+}\subset\mathcal{T}_{k}$
for $k>l$. This implies
$\Omega_{l}^{0}=\Omega(\mathcal{T}_{k}\setminus\mathcal{T}_{l}^{+}):=\bigcup\{T\in\mathcal{T}_{k},T\in\hspace{-10pt}\slash~\mathcal{T}^{+}_{l}\}$
and any refinement of $\mathcal{T}_{k}$ does not affect any element
in $\mathcal{T}_{l}^{+}$. Now we set
$\Omega_{l}^{\ast}:=\bigcup\{T\in\mathcal{T}_{k},T\cap\Omega_{l}^{0}\neq\emptyset\}$
and
$\Omega_{k}^{\times}:=\bigcup\{T\in\mathcal{T}_{k},T\cap\Omega_{l}^{+}\neq\emptyset\}$,
and write $I_k$ and $I_{k}^{sz}$ for the Lagrange and Scott-Zhang
interpolations respectively associated with $V_{k}$ \cite{ciarlet} \cite{sz}.
Then for any $\psi\in C^{\infty}(\bar{\Omega})$, we can derive
for $w=\psi-I_{k}\psi\in H^{1}(\Omega)$
by using the orthogonality
\eqref{residual_orthogonality} and elementwise integration by parts that
\begin{align}\label{lem_weak*_convergence_residual_proof01}
    &\quad|<\mathcal{R}(u_{k}^{\ast},\psi)>|=|<\mathcal{R}(u^{\ast}_{k},\psi-I_{k}\psi)>|=|<\mathcal{R}(u^{\ast}_{k},w-I_{k}^{sz}w)>|\nonumber\\
    &\leq C\sum_{T\in\mathcal{T}_{k}}\eta_{T,k,1}(u^{\ast}_{k},q_{k}^{\ast},f,u_{a})\|\psi-I_{k}\psi\|_{1,D_{T}}\nonumber\\
    &=C\big(\sum_{T\in\mathcal{T}_{k}\setminus\mathcal{T}_{l}^{+}}\eta_{T,k,1}(u^{\ast}_{k},q_{k}^{\ast},f,u_{a})\|\psi-I_{k}\psi\|_{1,D_{T}}
    +\sum_{T\in\mathcal{T}_{l}^{+}}\eta_{T,k,1}(u^{\ast}_{k},q_{k}^{\ast},f,u_{a})\|\psi-I_{k}\psi\|_{1,D_{T}}\big)\,.
\end{align}
Using \eqref{stability_error-indicator1} and the uniform boundedness of
$\|u_{k}^{\ast}\|_{1}$ and $\|q_{k}^{\ast}\|_{0,\Gamma_{i}}$, we have
\begin{equation}\label{lem_weak*_convergence_residual_proof02}
    \big(\sum_{T\in\mathcal{T}_{k}\setminus\mathcal{T}_{l}^{+}}\eta^{2}_{T,k,1}(u^{\ast}_{k},q_{k}^{\ast},f,u_{a})\big)^{1/2}\leq
    C(\|u_{k}^{\ast}\|_{1}+\|q_{k}^{\ast}\|_{0,\Gamma_{i}}+\|f\|_{0}+\|u_{a}\|_{\Gamma_{a}})\leq \widetilde{C}
\end{equation}
where $\widetilde{C}$ is independent of $k$. Furthermore, we can apply
the local interpolation error estimate for $I_{k}$ \cite{ciarlet} and the monotonicity of
the mesh-size function $h_{k}$ to obtain
\begin{equation}\label{lem_weak*_convergence_residual_proof03}
    \|w-I_{k}\psi\|_{1,\Omega_{l}^{\ast}}\leq
    C\|h_{l}\|_{\infty,\Omega_{l}^{\ast}}\|\psi\|_{2},\quad
    \|\psi-I_{k}\psi\|_{1,\Omega_{k}^{\times}}\leq
    C\|h_{l}\|_{\infty,\Omega_{k}^{\times}}\|\psi\|_{2}\leq C\|\psi\|_{2}.
\end{equation}
Now it follows readily from \eqref{lem_weak*_convergence_residual_proof01}-\eqref{lem_weak*_convergence_residual_proof03} and the local
quasi-uniformity of $\mathcal{T}_{l}$ that
for any $k\geq l$,
\begin{equation}\label{lem_weak*_convergence_residual_proof04}
    |<\mathcal{R}(u_{k}),\psi>|\leq
    C_{1}\|\psi\|_{2}\|h_{l}\|_{\infty,\Omega_{l}^{0}}+C_{2}\|\psi\|_{2}\big(\sum_{T\in\mathcal{T}_{l}^{+}}\eta^{2}_{T,k,1}(u^{\ast}_{k},q_{k}^{\ast},f,u_{a})\big)^{1/2}.
\end{equation}
To proceed our estimation, we can choose for any given $\varepsilon>0$
some sufficiently large $l$ by using Lemma
\ref{lem_convergence_zero_mesh-size}  such that
\begin{equation}\label{lem_weak*_convergence_residual_proof05}
    \|h_{l}\|_{\infty,\Omega_{l}^{0}}\leq\frac{\varepsilon}{2C_{1}\|\psi\|_{2}}.
\end{equation}
In addition, the marking strategy \eqref{maximum_marking} and Lemma
\ref{lem_convergence_zero_marking} ensure that
\[
    \lim_{k\rightarrow\infty}\max_{T\in\mathcal{T}_{k}\setminus\mathcal{M}_{k}}\eta_{T,k}(u^{\ast}_{k},p^{\ast}_{k},q^{\ast}_{k},f,u_{a},z)\leq
    \lim_{k\rightarrow\infty}\max_{T\in\mathcal{M}_{k}}\eta_{T,k}(u^{\ast}_{k},p^{\ast}_{k},q^{\ast}_{k},f,u_{a},z)=0,
\]
which, together with
$\mathcal{T}_{l}^{+}\cap\mathcal{M}_{k}=\emptyset$, implies
$$
\displaystyle\lim_{k\rightarrow\infty}\max_{T\in\mathcal{T}_{l}^{+}}\eta_{T,k}(u^{\ast}_{k},p^{\ast}_{k},q^{\ast}_{k},f,u_{a},z)=0.
$$
Therefore, we can choose $K\geq l$ for some fixed $l$ such that when
$k\geq K$,
\begin{equation}\label{lem_weak*_convergence_residual_proof06}
    \max_{T\in\mathcal{T}_{l}^{+}}\eta_{T,k,1}(u^{\ast}_{k},q^{\ast}_{k},f,u_{a})\leq\max_{T\in\mathcal{T}_{l}^{+}}\eta_{T,k}(u^{\ast}_{k},p^{\ast}_{k},q^{\ast}_{k},f,u_{a},z)\leq
    \frac{\varepsilon}{2C_{2}\|\psi\|_{2}}|\mathcal{T}_{l}^{+}|^{-\frac 12}.
\end{equation}
Then we can see from
\eqref{lem_weak*_convergence_residual_proof04}-\eqref{lem_weak*_convergence_residual_proof06}
that $<\mathcal{R}(u_{k}),\psi>$ is controlled by $\varepsilon$ for any $k\geq K$ and
$\psi\in C^{\infty}(\bar{\Omega})$, i.e.,
\begin{equation}\label{lem_weak*_convergence_residual_proof07}
    \lim_{k\rightarrow\infty}<\mathcal{R}(u^{\ast}_{k}),\psi>=0\quad\forall~\psi\in
    C^{\infty}(\bar{\Omega}).
\end{equation}
This gives the first convergence in (\ref{weak*_convergence_residual}) by
the density of $C^{\infty}(\bar{\Omega})$ in $H^{1}(\Omega)$.
\end{proof}

\begin{rem}\label{rem_weak*_convergence_residual}
One may see from
the second estimate in \eqref{lem_weak*_convergence_residual_proof06}
that for a fixed $l$,
\begin{equation}\label{conv_estimator_complement}
    \lim_{k\rightarrow\infty}\eta_{k}(u^{\ast}_{k},p^{\ast}_{k},q^{\ast}_{k},f,u_{a},z,\mathcal{T}_{l}^{+})=0.
\end{equation}
This observation will be used in the subsequent proof of Theorem \ref{thm_convergence_estimator}.
\end{rem}

Lemma \ref{lem_weak*_convergence_residual}  yields a important direct consequence.
Indeed, we know from \eqref{conv_vp_med}
that for any $\phi\in H^{1}(\Omega)$ and $v\in H^{1}(\Omega)$,
\beqnx
    <\mathcal{R}(u_{\infty}^{\ast}),\phi>&:=&(f,\phi)+(\gamma
    u_{a},\phi)_{\Gamma_{a}}-(q_{\infty}^{\ast},\phi)_{\Gamma_{i}}-a(u_{\infty}^{\ast},\phi)=\lim_{k\rightarrow\infty}<\mathcal{R}(u_{k}^{\ast}),\phi>\,, \\
<\mathcal{R}(p_{\infty}^{\ast}),v>&:=&(u_{\infty}^{\ast}-z,v)_{\Gamma_{a}}-a(p_{\infty}^{\ast},v)=\lim_{k\rightarrow\infty}<\mathcal{R}(p_{k}^{\ast}),v>\,.
\eqnx
Then the application of Lemma \ref{lem_weak*_convergence_residual} leads
readily to the following results about the vanishing residuals associated with
$u_{\infty}^{\ast}(q_{\infty}^{\ast})$ and $p_{\infty}^{\ast}(q_{\infty}^{\ast})$.

\begin{lem}\label{lem_zero_residual_med} The solution of the problem
\eqref{vp_state_med}-\eqref{gateaux_med} satisfies
\begin{equation}\label{zero_residual_med}
    <\mathcal{R}(u_{\infty}^{\ast}),\phi>=0\quad \m{and} \q <\mathcal{R}(p_{\infty}^{\ast}),\phi>=0\quad\forall~\phi\in
    H^{1}(\Omega).
\end{equation}
\end{lem}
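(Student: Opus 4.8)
The plan is to establish Lemma \ref{lem_zero_residual_med} essentially for free from the two preceding results, by identifying the residuals at the limiting triplet as limits of the discrete residuals. First I would observe that the quantities $<\mathcal{R}(u_{\infty}^{\ast}),\phi>$ and $<\mathcal{R}(p_{\infty}^{\ast}),\phi>$ are defined (in the displayed equations immediately preceding the lemma) precisely as the differences between the continuous residual forms and the bilinear form evaluated at $(u_{\infty}^{\ast},p_{\infty}^{\ast},q_{\infty}^{\ast})$. So the task reduces to showing these equal the limits $\lim_{k\to\infty}<\mathcal{R}(u_{k}^{\ast}),\phi>$ and $\lim_{k\to\infty}<\mathcal{R}(p_{k}^{\ast}),\phi>$, which in turn vanish by Lemma \ref{lem_weak*_convergence_residual}.

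The second step is to verify that these limits indeed hold, i.e.\ to pass to the limit in the defining formulas term by term. For the state residual, write
\[
<\mathcal{R}(u_{k}^{\ast}),\phi>-<\mathcal{R}(u_{\infty}^{\ast}),\phi>
=-(q_{k}^{\ast}-q_{\infty}^{\ast},\phi)_{\Gamma_{i}}-a(u_{k}^{\ast}-u_{\infty}^{\ast},\phi)\,.
\]
By Theorem \ref{thm_conv_vp_med} we have $\|q_{k}^{\ast}-q_{\infty}^{\ast}\|_{0,\Gamma_{i}}\to 0$ and $\|u_{k}^{\ast}-u_{\infty}^{\ast}\|_{1}\to 0$; since $a(\cdot,\cdot)$ is bounded on $H^{1}(\Omega)$ and the trace of $\phi$ on $\Gamma_{i}$ lies in $L^{2}(\Gamma_{i})$, both terms on the right tend to zero. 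Hence $<\mathcal{R}(u_{k}^{\ast}),\phi>\to<\mathcal{R}(u_{\infty}^{\ast}),\phi>$ for every $\phi\in H^{1}(\Omega)$. The costate residual is handled the same way:
\[
<\mathcal{R}(p_{k}^{\ast}),v>-<\mathcal{R}(p_{\infty}^{\ast}),v>
=(u_{k}^{\ast}-u_{\infty}^{\ast},v)_{\Gamma_{a}}-a(p_{k}^{\ast}-p_{\infty}^{\ast},v)\,,
\]
and again $\|u_{k}^{\ast}-u_{\infty}^{\ast}\|_{1}\to 0$ together with the trace theorem controls the first term while $\|p_{k}^{\ast}-p_{\infty}^{\ast}\|_{1}\to 0$ and boundedness of $a$ controls the second, so $<\mathcal{R}(p_{k}^{\ast}),v>\to<\mathcal{R}(p_{\infty}^{\ast}),v>$.

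Finally I would combine these two convergences with Lemma \ref{lem_weak*_convergence_residual}: since the discrete residuals tend to zero for every test function and also converge to the limiting residuals, the limiting residuals must be identically zero, which is exactly \eqref{zero_residual_med}. Honestly there is no real obstacle here — the lemma is a bookkeeping corollary; the only point requiring a moment's care is confirming that the defining expressions for $<\mathcal{R}(u_{\infty}^{\ast}),\cdot>$ and $<\mathcal{R}(p_{\infty}^{\ast}),\cdot>$ (given just above the lemma) are compatible with the strong-convergence statement \eqref{conv_vp_med} of Theorem \ref{thm_conv_vp_med}, so that the limit passage is legitimate for arbitrary $H^{1}(\Omega)$ test functions rather than merely for functions in $V_{\infty}$. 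Once that is noted, the proof is immediate.
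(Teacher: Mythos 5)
Your argument is exactly the paper's: the displayed identities preceding the lemma assert that the limiting residuals equal $\lim_{k\to\infty}<\mathcal{R}(u_{k}^{\ast}),\phi>$ and $\lim_{k\to\infty}<\mathcal{R}(p_{k}^{\ast}),v>$ via the strong convergence \eqref{conv_vp_med}, and then Lemma \ref{lem_weak*_convergence_residual} forces these limits to vanish. Your term-by-term passage to the limit simply makes explicit the step the paper leaves implicit, so the proposal is correct and follows the same route.
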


To continue our analysis, we now introduce two auxiliary continuous problems:

Find $u(q_{\infty}^{\ast})\in
H^{1}(\Omega)$ and $p(q^{\ast}_{\infty})\in H^{1}(\Omega)$ such that
\begin{eqnarray}
    a(u(q_{\infty}^{\ast}),\phi)&=&(f,\phi)+(\gamma
    u_{a},\phi)_{\Gamma_{a}}-(q_{\infty}^{\ast},\phi)_{\Gamma_{i}}\quad\forall~\phi\in H^{1}(\Omega)\,,
    \label{vp_state_aux}\\
    a(p(q^{\ast}_{\infty}),v)&=&(u(q_{\infty}^{\ast})-z,v)_{\Gamma_{a}}\quad\forall~v\in
H^{1}(\Omega). \label{vp_costate_aux}
\end{eqnarray}

\begin{lem}\label{lem_eqa_state&costate_aux} For
the solution $(u^{\ast}_{\infty},p^{\ast}_{\infty},q^{\ast}_{\infty})$
of the problem \eqref{vp_state_med}-\eqref{gateaux_med} and
the solutions $u(q_{\infty}^{\ast})$, $p(q^{\ast}_{\infty})$  of the
problems \eqref{vp_state_aux} and \eqref{vp_costate_aux}, there
hold that
\begin{equation}\label{eqa_state&costate_aux}
    u^{\ast}_{\infty}=u(q_{\infty}^{\ast}) \quad \m{and} \q
    p^{\ast}_{\infty}=p(q_{\infty}^{\ast})\quad\mbox{in
    } ~H^{1}(\Omega).
\end{equation}
\end{lem}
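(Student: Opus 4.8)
The plan is to deduce the claim immediately from Lemma~\ref{lem_zero_residual_med} together with the unique solvability of the variational problems \eqref{vp_state_aux}--\eqref{vp_costate_aux}. First I would unwind the definition of the residuals: by construction, $\langle\mathcal{R}(u_\infty^\ast),\phi\rangle$ is exactly the gap between the two sides of the state equation tested against $\phi$, so the first identity in \eqref{zero_residual_med} reads
\[
a(u_\infty^\ast,\phi)=(f,\phi)+(\gamma u_a,\phi)_{\Gamma_a}-(q_\infty^\ast,\phi)_{\Gamma_i}\qquad\forall\,\phi\in H^1(\Omega),
\]
which is precisely the variational problem \eqref{vp_state_aux} defining $u(q_\infty^\ast)$. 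Since $a(\cdot,\cdot)$ is a bounded, coercive bilinear form on $H^1(\Omega)$ (its induced norm $\|\cdot\|_a$ being equivalent to $\|\cdot\|_1$ by the Poincar\'e inequality), the Lax--Milgram theorem gives a unique solution to \eqref{vp_state_aux}, and therefore $u_\infty^\ast=u(q_\infty^\ast)$ in $H^1(\Omega)$.

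Next I would feed this identity into the second relation of \eqref{zero_residual_med}, which says
\[
a(p_\infty^\ast,v)=(u_\infty^\ast-z,v)_{\Gamma_a}\qquad\forall\,v\in H^1(\Omega).
\]
Because $u_\infty^\ast=u(q_\infty^\ast)$, the right-hand side equals $(u(q_\infty^\ast)-z,v)_{\Gamma_a}$, so $p_\infty^\ast$ solves exactly the problem \eqref{vp_costate_aux} defining $p(q_\infty^\ast)$. Applying unique solvability once more yields $p_\infty^\ast=p(q_\infty^\ast)$ in $H^1(\Omega)$, which finishes the proof.

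There is no delicate estimate here: the entire substance has been placed in Lemma~\ref{lem_zero_residual_med}, whose role is to upgrade the variational identities for $u_\infty^\ast$ and $p_\infty^\ast$ from holding only against test functions in $V_\infty$ to holding against all of $H^1(\Omega)$, thereby making them coincide with the \emph{continuous} auxiliary problems. The only point that needs care is the ordering of the two identifications: since the costate problem \eqref{vp_costate_aux} takes the state as data, one must establish $u_\infty^\ast=u(q_\infty^\ast)$ before the costate equations can be matched, so the argument has to be run for the state first and then for the costate.
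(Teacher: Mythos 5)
Your argument is correct and is essentially the paper's own proof: the paper simply writes out the uniqueness step explicitly as a coercivity estimate, bounding $C\|u(q_{\infty}^{\ast})-u^{\ast}_{\infty}\|_{1}\leq\sup_{\|\phi\|_{1}=1}\langle\mathcal{R}(u_{\infty}^{\ast}),\phi\rangle=0$ via the Poincar\'e inequality, and then repeats the same estimate for the costate after substituting $u^{\ast}_{\infty}=u(q_{\infty}^{\ast})$ into the right-hand side, exactly in the order you prescribe. Your appeal to Lax--Milgram uniqueness is the same underlying fact, so there is no substantive difference.
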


\begin{proof}
    The Poincar\'{e} inequality, \eqref{vp_state_aux} and Lemma \ref{lem_zero_residual_med} yield that
    \[
        C\|u(q_{\infty}^{\ast})-u^{\ast}_{\infty}\|_{1}\leq
        \sup_{\|\phi\|_{1}=1}a(u(q_{\infty}^{\ast})-u^{\ast}_{\infty},\phi)=
        \sup_{\|\phi\|_{1}=1}<\mathcal{R}(u_{\infty}^{\ast}),\phi>=0,
    \]
    so the first equality is proved.
    Then the second equality in (\ref{eqa_state&costate_aux}) follows from the first result,
    \eqref{vp_costate_aux} and the following estimates:
    \begin{align*}
        C\|p(q_{\infty}^{\ast})-p^{\ast}_{\infty}(q_{\infty}^{\ast})\|_{1}&\leq
        \sup_{\|v\|_{1}=1}a(p(q_{\infty}^{\ast})-p^{\ast}_{\infty},v)=\sup_{\|v\|_{1}=1}(u(q^{\ast}_{\infty})-z,v)_{\Gamma_{a}}-a(p^{\ast}_{\infty},v)\\
        &=\sup_{\|v\|_{1}=1}(u^{\ast}_{\infty}(q^{\ast}_{\infty})-z,v)_{\Gamma_{a}}-a(p^{\ast}_{\infty},v)=\sup_{\|v\|_{1}=1}<\mathcal{R}(p_{\infty}^{\ast}),v>=0.
    \end{align*}
\end{proof}

Now we are ready to present the first main result in this paper.
\begin{thm}\label{thm_conv_vp} Let $(u^{\ast},p^{\ast},q^{\ast})$ be the
solution of the problem \eqref{vp_state}-\eqref{gateaux_cont}. Then
Algorithm \ref{alg_afem_flux-recon} produces a sequence of discrete
solutions ${(u_{k}^{\ast},p_{k}^{\ast},q_{k}^{\ast})}$ which converges to
$(u^{\ast},p^{\ast},q^{\ast})$ in the following sense
\begin{equation}\label{conv_vp}
    \lim_{k\rightarrow\infty}\|u^{\ast}-u_{k}^{\ast}\|_{1}=0,\quad
    \lim_{k\rightarrow\infty}\|p^{\ast}-p_{k}^{\ast}\|_{1}=0,\quad
    \lim_{k\rightarrow\infty}\|q^{\ast}-q_{k}^{\ast}\|_{0,\Gamma_{i}}=0.
\end{equation}
\end{thm}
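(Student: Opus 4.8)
The plan is to identify the limiting triplet $(u_{\infty}^{\ast},p_{\infty}^{\ast},q_{\infty}^{\ast})$ solving \eqref{vp_state_med}--\eqref{gateaux_med} with the exact triplet $(u^{\ast},p^{\ast},q^{\ast})$; once this is done, \eqref{conv_vp} is immediate by inserting the identification into the convergence \eqref{conv_vp_med} of Theorem~\ref{thm_conv_vp_med}. First I would recall from Lemma~\ref{lem_eqa_state&costate_aux} together with the definitions \eqref{vp_state_aux}--\eqref{vp_costate_aux} that $u_{\infty}^{\ast}=u(q_{\infty}^{\ast})$ and $p_{\infty}^{\ast}=p(q_{\infty}^{\ast})$ already satisfy the continuous state equation \eqref{vp_state} and costate equation \eqref{vp_costate} (with $q^{\ast},u^{\ast}$ replaced by $q_{\infty}^{\ast},u_{\infty}^{\ast}$) tested against \emph{all} of $H^{1}(\Omega)$. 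Hence the only relation left to promote is the discrete-flavoured optimality \eqref{gateaux_med}, which is known only for $w\in Q_{\infty}$, to the genuine continuous condition \eqref{gateaux_cont} valid for every $w\in L^{2}(\Gamma_{i})$.

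The key step --- and the one I expect to be the crux --- is the observation that $p_{\infty}^{\ast}|_{\Gamma_{i}}\in Q_{\infty}$. Indeed, since $p_{\infty}^{\ast}\in V_{\infty}$ there is a sequence $v_{k}\in V_{k}$ with $v_{k}\to p_{\infty}^{\ast}$ in $H^{1}(\Omega)$; the trace theorem then gives $v_{k}|_{\Gamma_{i}}\to p_{\infty}^{\ast}|_{\Gamma_{i}}$ in $L^{2}(\Gamma_{i})$, while each $v_{k}|_{\Gamma_{i}}$ lies in $V_{k,\Gamma_{i}}\subset Q_{\infty}$, and $Q_{\infty}$ is closed in $L^{2}(\Gamma_{i})$. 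Consequently $\beta q_{\infty}^{\ast}-p_{\infty}^{\ast}|_{\Gamma_{i}}\in Q_{\infty}$, and choosing $w=\beta q_{\infty}^{\ast}-p_{\infty}^{\ast}|_{\Gamma_{i}}$ in \eqref{gateaux_med} forces
\[
    \|\beta q_{\infty}^{\ast}-p_{\infty}^{\ast}\|_{0,\Gamma_{i}}^{2}=0,
\]
so that $\beta q_{\infty}^{\ast}=p_{\infty}^{\ast}|_{\Gamma_{i}}$ as elements of $L^{2}(\Gamma_{i})$, and therefore $(\beta q_{\infty}^{\ast}-p_{\infty}^{\ast},w)_{\Gamma_{i}}=0$ for all $w\in L^{2}(\Gamma_{i})$; this is exactly \eqref{gateaux_cont} with $(q^{\ast},p^{\ast})$ replaced by $(q_{\infty}^{\ast},p_{\infty}^{\ast})$.

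Combining the above, $(u_{\infty}^{\ast},p_{\infty}^{\ast},q_{\infty}^{\ast})$ satisfies the full continuous optimality system \eqref{vp_state}--\eqref{gateaux_cont}. Since the minimizer of \eqref{constrained_min_cont}--\eqref{vp_state_constraint} is unique and is characterized by \eqref{vp_state}--\eqref{gateaux_cont} \cite{xiezou} (with the state and then the costate determined uniquely by the flux through \eqref{vp_state} and \eqref{vp_costate}), I would conclude $(u_{\infty}^{\ast},p_{\infty}^{\ast},q_{\infty}^{\ast})=(u^{\ast},p^{\ast},q^{\ast})$, and then \eqref{conv_vp} follows at once from \eqref{conv_vp_med}. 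Apart from the embedding $V_{\infty}|_{\Gamma_{i}}\subset Q_{\infty}$ underlying the crucial step --- which is precisely what lets the limiting Euler--Lagrange equation be tested against all of $L^{2}(\Gamma_{i})$ rather than merely $Q_{\infty}$ --- the remainder is routine bookkeeping with the lemmas already established.
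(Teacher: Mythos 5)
Your argument is correct and follows essentially the same route as the paper: both rest on Lemma \ref{lem_eqa_state&costate_aux} (which promotes the limiting state and costate equations to all of $H^{1}(\Omega)$) and on the trace--density observation $p_{\infty}^{\ast}|_{\Gamma_{i}}\in Q_{\infty}$, which upgrades \eqref{gateaux_med} to $\beta q_{\infty}^{\ast}=p_{\infty}^{\ast}$ on $\Gamma_{i}$, after which \eqref{conv_vp} follows from Theorem \ref{thm_conv_vp_med}. The only difference is the last step: you invoke uniqueness of the solution of the continuous optimality system \eqref{vp_state}--\eqref{gateaux_cont} (a legitimate appeal, by strict convexity of $\mathcal{J}$), whereas the paper establishes the identification $q^{\ast}=q_{\infty}^{\ast}$ directly via the duality computation $\beta\|q^{\ast}-q_{\infty}^{\ast}\|_{0,\Gamma_{i}}^{2}+\|u(q_{\infty}^{\ast})-u^{\ast}\|_{0,\Gamma_{a}}^{2}=(q^{\ast}-q_{\infty}^{\ast},p(q_{\infty}^{\ast})-p_{\infty}^{\ast})_{\Gamma_{i}}=0$, which is in effect the same uniqueness proof written out.
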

\begin{proof}
We first show
$q^{\ast}=q^{\ast}_{\infty}$, which, together with Theorem \ref{thm_conv_vp_med}, leads to the third convergence. By means of the definition of $Q_{\infty}$ in section\,\ref{sec:limit}, the trace theorem and the
density of $\bigcup_{k\geq 0}V_{k}$ in $V_{\infty}$,
it is not difficult to get
$p^{\ast}_{\infty}|_{\Gamma_{i}}\in Q_{\infty}$.
Then there exists a sequence $\{p_{k}\}\subset\bigcup_{k\geq 0}V_{k}$
such that $p_{k}\rightarrow p^{\ast}_{\infty}$ in $H^{1}(\Omega)$, which, together with the trace theorem, allows
\[
    p_{k}|_{\Gamma_{i}}\rightarrow p^{\ast}_{\infty}|_{\Gamma_{i}}\quad\mbox{in}~L^{2}(\Gamma_{i}).
\]
Thus we have from \eqref{gateaux_cont} and \eqref{gateaux_med} that
\begin{equation}\label{thm_conv_vp_proof01}
    \beta q^{\ast}=p^{\ast},\quad \beta
    q^{\ast}_{\infty}=p^{\ast}_{\infty}\quad\mbox{on } ~\Gamma_{i}.
\end{equation}
On the other hand, we deduce from \eqref{vp_state}-\eqref{vp_costate}
and \eqref{vp_state_aux}-\eqref{vp_costate_aux} that
\begin{equation}\label{thm_conv_vp_proof02}
    a(u(q_{\infty}^{\ast})-u^{\ast},\phi)=(q^{\ast}-q^{\ast}_{\infty},\phi)_{\Gamma_{i}}\quad\forall~\phi\in H^{1}(\Omega),
\end{equation}
\begin{equation}\label{thm_conv_vp_proof03}
    a(p(q_{\infty}^{\ast})-p^{\ast},v)=(u(q^{\ast}_{\infty})-u^{\ast},v)_{\Gamma_{a}}\quad\forall~v\in H^{1}(\Omega).
\end{equation}
By taking $\phi=p(q_{\infty}^{\ast})-p^{\ast}$  and $v=u(q^{\ast}_{\infty})-u^{\ast}$ respectively
in \eqref{thm_conv_vp_proof02} and \eqref{thm_conv_vp_proof03}, we derive
\[
    \|u(q^{\ast}_{\infty})-u^{\ast}\|^{2}_{0,\Gamma_{a}}=(q^{\ast}-q^{\ast}_{\infty},p(q_{\infty}^{\ast})-p^{\ast})_{\Gamma_{i}}.
\]
With \eqref{thm_conv_vp_proof01}, we are further led to
\begin{align}\label{thm_conv_vp_proof04}
    &\beta\|q^{\ast}-q^{\ast}_{\infty}\|^{2}_{0,\Gamma_{i}}+\|u(q^{\ast}_{\infty})-u^{\ast}\|^{2}_{0,\Gamma_{a}}
            =(q^{\ast}-q^{\ast}_{\infty},\beta q^{\ast}-\beta
    q^{\ast}_{\infty}+p(q_{\infty}^{\ast})-p^{\ast})_{\Gamma_{i}}\nonumber\\
    =&(q^{\ast}-q^{\ast}_{\infty},p(q_{\infty}^{\ast})-p^{\ast}_{\infty})_{\Gamma_{i}}\leq
\|q^{\ast}-q^{\ast}_{\infty}\|_{0,\Gamma_{i}}\|p(q_{\infty}^{\ast})-p^{\ast}_{\infty}\|_{0,\Gamma_{i}},
\end{align}
which, together with the second equality in \eqref{eqa_state&costate_aux}, implies
\[
    \|q^{\ast}-q^{\ast}_{\infty}\|_{0,\Gamma_{i}}\leq
    \beta^{-1}\|p(q_{\infty}^{\ast})-p^{\ast}_{\infty}\|_{0,\Gamma_{i}}\leq
C\beta^{-1}\|p(q_{\infty}^{\ast})-p^{\ast}_{\infty}\|_{1}=0.
\]
So the last convergence in \eqref{conv_vp} holds thanks to Theorem
\ref{thm_conv_vp_med}. Moreover, it follows directly from
\eqref{thm_conv_vp_proof02} that
\begin{equation}\label{thm_conv_vp_proof05}
u(q_{\infty}^{\ast})=u^{\ast}\quad\mbox{in }H^{1}(\Omega).
\end{equation}
Now the first convergence in \eqref{conv_vp_med} and the first equality in
\eqref{eqa_state&costate_aux} yield the first result in \eqref{conv_vp}, i.e.,
$u_{k}^{\ast}\to u^{\ast}_{\infty}=u^{\ast}$
in $H^{1}(\Omega)$ as $k\rightarrow\infty$. Similarly, we can show using \eqref{thm_conv_vp_proof03} and
\eqref{thm_conv_vp_proof05} that
$p(q_{\infty}^{\ast})=p^{\ast}$ in $H^{1}(\Omega)$, then the desired
second convergence in \eqref{eqa_state&costate_aux} follows from Theorem \ref{thm_conv_vp_med} and
Lemma \ref{lem_eqa_state&costate_aux}.
\end{proof}

With the help of Theorem \ref{thm_conv_vp} and the local efficiency
\eqref{local_efficiency}, we are ready to establish the second main result of this paper.
\begin{thm}\label{thm_convergence_estimator} The sequence
$\{\eta_{k}(u^{\ast}_{k},p^{\ast}_{k},q^{\ast}_{k},f,u_{a},z)\}$ of
the estimators generated by Algorithm \ref{alg_afem_flux-recon}
converges to zero.
\end{thm}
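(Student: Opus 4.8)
The plan is to derive the convergence of $\eta_k$ from three facts already available: the strong convergence $(u_k^\ast,p_k^\ast,q_k^\ast)\to(u^\ast,p^\ast,q^\ast)$ of Theorem \ref{thm_conv_vp}, the local efficiency bound \eqref{local_efficiency}, and the two mesh-decay properties recorded in section 5.1 --- namely \eqref{conv_estimator_complement}, that the estimator over any \emph{fixed} $\mathcal{T}_l^+$ vanishes as $k\to\infty$, and \eqref{convergence_zero_mesh-size}, that $\|h_l\|_{\infty,\Omega_l^0}\to0$. First, for a fixed integer $l$ and any $k\ge l$, I would split
\begin{align*}
\eta_k^2(u_k^\ast,p_k^\ast,q_k^\ast,f,u_a,z)
&=\eta_k^2(u_k^\ast,p_k^\ast,q_k^\ast,f,u_a,z,\mathcal{T}_l^+)\\
&\quad+\eta_k^2(u_k^\ast,p_k^\ast,q_k^\ast,f,u_a,z,\mathcal{T}_k\setminus\mathcal{T}_l^+).
\end{align*}
The first term on the right tends to $0$ as $k\to\infty$ by \eqref{conv_estimator_complement}, so everything reduces to the second one, which is the estimator over exactly the part of $\mathcal{T}_k$ covering $\Omega_l^0$.

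For that term I would sum the local efficiency estimate \eqref{local_efficiency} over $T\in\mathcal{T}_k\setminus\mathcal{T}_l^+$. The error contributions add up to a constant multiple of $\|u^\ast-u_k^\ast\|_1^2+\|p^\ast-p_k^\ast\|_1^2+\|q^\ast-q_k^\ast\|_{0,\Gamma_i}^2$, which vanishes as $k\to\infty$ by Theorem \ref{thm_conv_vp}. For the oscillation contributions the key remark is geometric: every $T\in\mathcal{T}_k$ contained in $\overline{\Omega_l^0}$ is a descendant of an element of $\mathcal{T}_l^0$, hence has diameter at most $\|h_l\|_{\infty,\Omega_l^0}$, and by the uniform shape-regularity of $\{\mathcal{T}_k\}$ the same bound (up to a fixed constant) holds for every element of the star $\omega_T$, including the thin layer of elements of $\mathcal{T}_l^+$ sitting just across $\partial\Omega_l^0$. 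Combining this with the trivial estimate $\|g-\bar g\|_0\le\|g\|_0$ for the data $f,u_a,z$, the fact that for the linear elements used here the interior jump residuals are piecewise constant and hence carry no oscillation, and the uniform bounds on $\|u_k^\ast\|_1$, $\|p_k^\ast\|_1$ and $\|q_k^\ast\|_{0,\Gamma_i}$ (cf. \eqref{residual_uni-bounded}), I would bound the total oscillation over $\mathcal{T}_k\setminus\mathcal{T}_l^+$ by $C\|h_l\|_{\infty,\Omega_l^0}^2$ with $C$ independent of both $k$ and $l$. Letting $k\to\infty$ then yields $\limsup_{k\to\infty}\eta_k^2\le C\|h_l\|_{\infty,\Omega_l^0}^2$; since the left-hand side does not depend on $l$, sending $l\to\infty$ and invoking \eqref{convergence_zero_mesh-size} gives $\lim_{k\to\infty}\eta_k=0$.

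The delicate part of this argument is expected to be the oscillation bookkeeping rather than anything conceptual: one has to check carefully that the layer of $\mathcal{T}_l^+$-elements entering through the stars $\omega_T$ adjacent to $\Omega_l^0$ really does inherit the small diameter $\|h_l\|_{\infty,\Omega_l^0}$ from its neighbours in $\mathcal{T}_l^0$ (this is precisely where local quasi-uniformity of $\mathcal{T}_l$ is needed), and that each data-oscillation and each jump-residual-oscillation term on $\mathcal{T}_k\setminus\mathcal{T}_l^+$ is genuinely of order $\|h_l\|_{\infty,\Omega_l^0}$. One must also be careful with the order of the limits: the bound $\limsup_{k\to\infty}\eta_k^2\le C\|h_l\|_{\infty,\Omega_l^0}^2$ has to be obtained for each fixed $l$ before $k\to\infty$, and only then may $l$ be sent to infinity.
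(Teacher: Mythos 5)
Your argument is correct and coincides with the paper's own proof: the same splitting of $\eta_k^2$ over $\mathcal{T}_l^+$ and $\mathcal{T}_k\setminus\mathcal{T}_l^+$, the same summation of the local efficiency bound \eqref{local_efficiency} combined with Theorem \ref{thm_conv_vp}, the same control of the oscillation terms through $\max_{T\in\mathcal{T}_k\setminus\mathcal{T}_l^+}h_T\leq\|h_l\|_{\infty,\Omega_l^0}$ and the uniform bounds on the discrete solutions, and the same order of limits (first $k\to\infty$ for fixed $l$, then $l\to\infty$). The only difference is cosmetic: you spell out the quasi-uniformity bookkeeping for the stars $\omega_T$ near $\partial\Omega_l^0$, which the paper leaves implicit.
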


\begin{proof}
We split the estimator for $k\geq l$ as in the proof of Lemma
\ref{lem_weak*_convergence_residual} that
\begin{equation}\label{thm_convergence_estimator_proof01}
    \eta^{2}_{k}(u^{\ast}_{k},p^{\ast}_{k},q^{\ast}_{k},f,u_{a},z)=\eta_{k}^{2}(u^{\ast}_{k},p^{\ast}_{k},q^{\ast}_{k},f,u_{a},z,\mathcal{T}_{k}\setminus\mathcal{T}_{l}^{+})+\eta_{k}^{2}(u^{\ast}_{k},p^{\ast}_{k},q^{\ast}_{k},f,u_{a},z,\mathcal{T}_{l}^{+}).
\end{equation}
It follows from \eqref{vp_state_disc}-\eqref{vp_costate_disc} and the strong
convergence of $\{q_{k}^{\ast}\}$ that
$\|u_{k}^{\ast}\|_{1}$, $\|p_{k}^{\ast}\|_{1}$ and
$\|q_{k}^{\ast}\|_{0,\Gamma_{i}}$ are all uniformly bounded above by a constant $C_{stab}$.
Summing up the lower bound \eqref{local_efficiency} over all elements in $\mathcal{T}_{k}\setminus\mathcal{T}_{l}^{+}$, we obtain
\[
    \begin{split}
    \eta_{k}^{2}(u^{\ast}_{k},p^{\ast}_{k},q^{\ast}_{k},f,u_{a},z,\mathcal{T}_{k}\setminus\mathcal{T}_{l}^{+})
    &\leq C\sum_{T\in\mathcal{T}_{k}\setminus\mathcal{T}_{l}^{+}}\big(\|u^{\ast}-u^{\ast}_{k}\|^{2}_{0,\omega_{T}}+\|p^{\ast}-p^{\ast}_{k}\|^{2}_{0,\omega_{T}}+\|q^{\ast}-q^{\ast}_{k}\|^{2}_{0,\partial
    T\cap\Gamma_{i}}\\
    &\qquad\qquad\qquad+\mathrm{osc}^{2}_{k}(f,\omega_{T})+\mathrm{osc}^{2}_{k}(u^{\ast}_{k},q^{\ast}_{k},\partial T)+\mathrm{osc}^{2}_{h}(p^{\ast}_{k},u^{\ast}_{k},\partial
    T)\big)\\
    &\leq C\big(\|u^{\ast}-u^{\ast}_{k}\|_{1}^{2}+\|p^{\ast}-p^{\ast}_{h}\|^{2}_{0}+\|q^{\ast}-q^{\ast}_{h}\|^{2}_{0,\Gamma_{i}}\\
            &\q +\max_{T\in\mathcal{T}_{k}\setminus\mathcal{T}_{l}^{+}}h_{T}(\|f\|^{2}_{0}+\|u_{a}\|_{0,\Gamma_{i}}^{2}+\|z\|^{2}_{0,\Gamma_{a}}+\|u_{k}^{\ast}\|^{2}_{1}+\|p_{k}^{\ast}\|^{2}_{1}+\|q_{k}^{\ast}\|^{2}_{0,\Gamma_{i}})\big),\\
    &\leq C\big(\|u^{\ast}-u^{\ast}_{k}\|_{1}^{2}+\|p^{\ast}-p^{\ast}_{h}\|^{2}_{0}+\|q^{\ast}-q^{\ast}_{h}\|^{2}_{0,\Gamma_{i}}
            \\
            &\q +\max_{T\in\mathcal{T}_{k}\setminus\mathcal{T}_{l}^{+}}h_{T}(\|f\|^{2}_{0}
           +\|u_{a}\|_{0,\Gamma_{i}}^{2}+\|z\|^{2}_{0,\Gamma_{a}}+C^{2}_{stab})\big),
    \end{split}
\]
where we used the facts that $\bar{f}_{T}$,
$\bar{J}_{F1}(u^{\ast}_{k},q_{k}^{\ast})$ and
$\bar{J}_{F2}(p^{\ast}_{k},u^{\ast}_{k})$ are the best
$L^{2}$-projections onto constant spaces and $h_{F}\leq C h_{T}$ for
any $F\in\partial
    T\cap\mathcal{F}
_{h}(\Gamma)$.
To complete the proof, we recall that
$\max_{T\in\mathcal{T}_{k}\setminus\mathcal{T}_{l}^{+}}h_{T}\leq\|h_{l}\|_{\infty,\Omega_{l}^{0}}\rightarrow0$
as $l\rightarrow\infty$ by Lemma \ref{lem_convergence_zero_mesh-size} and the
monotonicity of $h_{k}$, and the convergences in (\ref{conv_vp})
and (\ref{conv_estimator_complement}),
hence we can require two terms in
\eqref{thm_convergence_estimator_proof01} to be smaller than any given
positive number once we fix a large $l$ and choose $k$ sufficiently large.
\end{proof}

\subsection{Generalizations to other marking strategies}
In this section we shall extend the convergence results of Algorithm \ref{alg_afem_flux-recon}
established in the previous section\,\ref{sec:main} to the cases when
the marking criterion \eqref{maximum_marking} in Algorithm \ref{alg_afem_flux-recon}
is replaced by three other popular marking strategies, i.e.,
the equidistribution strategy, the modified equidistribution strategy and the practical D\"{o}rfler strategy.

By carefully reviewing the previous
analysis, it is not difficult to discover that Theorems \ref{thm_conv_constrained_min_med}-\ref{thm_conv_vp_med} and
Lemmas \ref{lem_convergence_zero_mesh-size}-\ref{lem_convergence_zero_marking} are all independent of
any specific marking strategy, and the maximum strategy
\eqref{maximum_marking} is only used in the proof of Lemma
\ref{lem_weak*_convergence_residual} for the condition
\begin{equation}\label{marking_condition}
    \max_{T\in\mathcal{T}_{k}\setminus\mathcal{M}_{k}}\eta_{T,k}(u^{\ast}_{k},p^{\ast}_{k},q^{\ast}_{k},f,u_{a},z)\leq
    \max_{T\in\mathcal{M}_{k}}\eta_{T,k}(u^{\ast}_{k},p^{\ast}_{k},q^{\ast}_{k},f,u_{a},z)
\end{equation}
to hold. Therefore, it suffices for us to check
whether this condition \eqref{marking_condition} is satisfied also by the aforementioned three strategies.

\paragraph{The equidistribution strategy.} Given a parameter $\theta\in [0,1]$ and a tolerance \textsf{TOL},
this strategy selects a subset $\mathcal{M}_{k}$ of all such elements $T\in \mathcal{T}_{k}$ to mark, which satisfies
\begin{equation}\label{equidistribution_marking}
    \eta_{_{T,k}}(u^{\ast}_{k},p^{\ast}_{k},q^{\ast}_{k},f,u_{a},z)\geq\theta\,\textsf{TOL}/\sqrt{|\mathcal{T}_{k}|}.
 \end{equation}
In practice, if $\eta_{k}(u^{\ast}_{k},p^{\ast}_{k},q^{\ast}_{k},f,u_{a},z)\leq\textsf{TOL}$,
the adaptive algorithm is terminated. It is easy to verify that whenever
$\eta_{k}(u^{\ast}_{k},p^{\ast}_{k},q^{\ast}_{k},f,u_{a},z)>\textsf{TOL}$
the element with the maximal error indicator is always included in
$\mathcal{M}_{k}$ according to \eqref{equidistribution_marking}.
Hence, \eqref{marking_condition} holds for the equidistribution
strategy. Then arguing as in Theorem\,\ref{thm_convergence_estimator}  for
the case of the maximum strategy, we have the following similar conclusion.
\begin{thm}\label{thm_convergence_equidistribution-strategy} Let
$(u^{\ast},p^{\ast},q^{\ast})$ be the solution of the problem
\eqref{vp_state}-\eqref{gateaux_cont} and
$\{(u_{k}^{\ast},p_{k}^{\ast},q_{k}^{\ast})\}$ be a sequence of
discrete solutions produced by Algorithm \ref{alg_afem_flux-recon}
with \eqref{equidistribution_marking} in place of
\eqref{maximum_marking} in the module \emph{\textsf{MARK}}. Then for
a given tolerance \emph{\textsf{TOL}}, the following inequality holds
after a finite number of iterations:
\begin{equation}\label{convergence_equidistribution-strategy}
    \eta_{k}(u^{\ast}_{k},p^{\ast}_{k},q^{\ast}_{k},f,u_{a},z)\leq\emph{\textsf{TOL}}\,.
\end{equation}
\end{thm}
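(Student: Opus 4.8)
The plan is to build on the analysis already carried out for the maximum strategy, exploiting the observation that the particular form of \eqref{maximum_marking} entered the whole convergence chain only through the inequality \eqref{marking_condition}, which is used a single time, inside the proof of Lemma \ref{lem_weak*_convergence_residual}: there it serves to bound $\max_{T\in\mathcal{T}_{k}\setminus\mathcal{M}_{k}}\eta_{T,k}$ by $\max_{T\in\mathcal{M}_{k}}\eta_{T,k}$ and hence, together with $\mathcal{T}_{l}^{+}\cap\mathcal{M}_{k}=\emptyset$, to obtain $\lim_{k\to\infty}\max_{T\in\mathcal{T}_{l}^{+}}\eta_{T,k}=0$ and the estimate \eqref{lem_weak*_convergence_residual_proof06}. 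First I would record the verification, already sketched before the statement: if $\eta_{k}(u^{\ast}_{k},p^{\ast}_{k},q^{\ast}_{k},f,u_{a},z)>\textsf{TOL}$, then the element attaining $\max_{T\in\mathcal{T}_{k}}\eta_{T,k}$ clears the threshold $\theta\,\textsf{TOL}/\sqrt{|\mathcal{T}_{k}|}$ in \eqref{equidistribution_marking}, so it is marked and \eqref{marking_condition} holds.

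Granting \eqref{marking_condition}, every ingredient used to prove $\eta_{k}\to 0$ under the maximum strategy remains valid verbatim: Lemma \ref{lem_convergence_zero_marking} (whose proof never invokes \eqref{maximum_marking}), Lemma \ref{lem_weak*_convergence_residual} (whose only use of the marking module is \eqref{marking_condition}), Lemma \ref{lem_zero_residual_med}, Lemma \ref{lem_eqa_state&costate_aux}, Theorem \ref{thm_conv_vp}, and finally Theorem \ref{thm_convergence_estimator} (whose proof additionally uses only the marking-independent facts \eqref{conv_vp} and \eqref{conv_estimator_complement}). Hence, repeating the proof of Theorem \ref{thm_convergence_estimator} word for word, the sequence $\{\eta_{k}(u^{\ast}_{k},p^{\ast}_{k},q^{\ast}_{k},f,u_{a},z)\}$ produced by Algorithm \ref{alg_afem_flux-recon} with \eqref{equidistribution_marking} in place of \eqref{maximum_marking} still converges to zero as $k\to\infty$.

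The conclusion \eqref{convergence_equidistribution-strategy} is then immediate: for the prescribed tolerance $\textsf{TOL}>0$ there exists $K$ with $\eta_{k}(u^{\ast}_{k},p^{\ast}_{k},q^{\ast}_{k},f,u_{a},z)\le\textsf{TOL}$ for all $k\ge K$, so the desired inequality is reached after the finite number $K$ of iterations; in particular the practical stopping test $\eta_{k}\le\textsf{TOL}$ is triggered in finitely many steps.

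I do not anticipate a genuine obstacle here; the only point demanding care is the bookkeeping that confirms that each lemma and theorem in the chain leading to Theorem \ref{thm_convergence_estimator} depends on the module \textsf{MARK} solely through \eqref{marking_condition} and not through any sharper feature of \eqref{maximum_marking} — most delicately the passage to \eqref{lem_weak*_convergence_residual_proof06} in the proof of Lemma \ref{lem_weak*_convergence_residual}, together with the harmless fact that Lemma \ref{lem_convergence_zero_marking} is already marking-independent. Once this is checked, the argument is a transcription of the maximum-strategy proof followed by the one-line deduction above.
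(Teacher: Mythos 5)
Your proposal is correct and follows essentially the same route as the paper: verify that whenever $\eta_{k}>\textsf{TOL}$ the element attaining the maximal indicator satisfies \eqref{equidistribution_marking} and is therefore marked, so that \eqref{marking_condition} holds, and then invoke the fact that the entire chain leading to Theorem \ref{thm_convergence_estimator} uses the marking module only through \eqref{marking_condition}. The concluding contrapositive step (if $\eta_{k}>\textsf{TOL}$ for all $k$ then $\eta_{k}\to 0$ gives a contradiction) matches the paper's intended argument.
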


\paragraph{The modified equidistribution strategy.} Given a parameter $\theta\in [0,1]$,
this strategy selects a subset $\mathcal{M}_{k}$ of all such elements $T\in \mathcal{T}_{k}$ to mark, which satisfies
    \begin{equation}\label{modified-equidistribution_marking}
       \eta_{_{T,k}}(u^{\ast}_{k},p^{\ast}_{k},q^{\ast}_{k},f,u_{a},z)\geq\theta\eta_{k}(u^{\ast}_{k},p^{\ast}_{k},q^{\ast}_{k},f,u_{a},z)
       /\sqrt{|\mathcal{T}_{k}|}.
    \end{equation}
With this marking strategy, the convergence results \eqref{conv_vp} and Theorem \ref{thm_convergence_estimator}
still hold true for Algorithm \ref{alg_afem_flux-recon}
since we may easily observe that the modified equidistribution strategy satisfies \eqref{marking_condition}.

\paragraph{The practical D\"{o}rfler strategy.} Given a parameter $\theta\in (0,1]$, this strategy
marks a subset $\mathcal{M}_{k}$ of elements in $\mathcal{T}_{k}$ that satisfy
\begin{align}
    \eta_{k}(u^{\ast}_{k},p^{\ast}_{k},q^{\ast}_{k},f,u_{a},z,\mathcal{M}_{k})&\geq\theta\eta_{k}(u^{\ast}_{k},p^{\ast}_{k},q^{\ast}_{k},f,u_{a},z),\label{Dorfler_marking}\\
    \min_{T\in\mathcal{M}_{k}}\eta_{T,k}(u^{\ast}_{k},p^{\ast}_{k},q^{\ast}_{k},f,u_{a},z)&\geq
     \max_{T\in\mathcal{T}_{k}\setminus\mathcal{M}_{k}}\eta_{T,k}(u^{\ast}_{k},p^{\ast}_{k},q^{\ast}_{k},f,u_{a},z).\label{Dorfler_marking_aux}
\end{align}
We can easily verify that \eqref{Dorfler_marking_aux} ensures the condition \eqref{marking_condition},
so the convergence results \eqref{conv_vp} and Theorem \ref{thm_convergence_estimator}
still follow.

\section*{Concluding remarks}
We have investigated a new adaptive finite element method for distributed flux reconstruction proposed recently
in \cite{lxz}. It has been demonstrated that as the algorithm proceeds
the adaptive sequence of the discrete triplets generated by the algorithm
converges to the true flux in $L^{2}$-norm, the true state and costate variables
in $H^{1}$-norm and the relevant sequence of estimators also has a vanishing limit. The latter guarantees that
the adaptive algorithm may stop within any given tolerance after a finite number of iterations.
For the sake of convenience, convergence results are established in the case of the maximum strategy in the module \textsf{MARK}
and then extended to other more practical marking strategies.

In the course of the convergence analysis, we have employed some techniques from nonlinear optimizations
to derive an important auxiliary result: the sequence of adaptive triplets generated by the algorithm
converges strongly to some limiting triplet. We believe there exist similar results for other inverse
problems in terms of output least-squares formulations with PDE constraints, so may follow the same line
to study their related AFEMs.

The convergence theory developed here may be extended to some nonlinear inverse problems
such as the reconstruction of the Robin coefficient
on an inaccessible part of the
boundary from some accessible boundary measurement data on the basis of an adaptive finite element
method.


\begin{thebibliography}{99}

\bibitem{ao}
M. Ainsworth and J. T. Oden, \textit{A Posteriori Error Estimation
in Finite Element Analysis}, Pure and Applied Mathematics,
Wiley-Interscience, New York, 2000.

\bibitem{ali}
O. M. Alifanov, \textit{Inverse Heat Transfer Problems}, Springer,
Berlin, 1994.

\bibitem{br}
I. Babu\v{s}ka and W. Rheinboldt, \textit{Error estimates for
adaptive finite element computations}, SIAM J. Numer. Anal., 15
(1978), 736-754.

\bibitem{bvogelius}
I. Babu\u{s}ka and M. Vogelius, \textit{Feedback and adaptive finite
element solution of one-dimensional boundary value problem}, Numer.
Math., 44 (1984), 75-102.

\bibitem{bangerth}
W. Bangerth and A. Joshi, \textit{Adaptive finite element methods
for the solution of inverse problems in optical tomography}, Inverse
Problems, 24 (2008), 1-22.

\bibitem{ban}
W. Bangerth and R. Rannacher, \textit{Adaptive Finite Element
Methods for Differential Equations}, Lectures in Mathematics,
ETH-Z\"{u}rich. Birkh\"{a}user, Basel, 2003.

\bibitem{bkr}
R. Becker, H. Kapp and R. Rannacher, \textit{Adaptive finite element
methods for optimal control of partial differential equations: Basic
concept}, SIAM J. Control Optim., 39 (2000), 113-132.

\bibitem{beckervexler}
R. Becker and B. Vexler, \textit{A posteriori error estimation for
finite element discretization of parameter identification problems},
Numer. Math., 96 (2004), 435-459.

\bibitem{beilinajohnson}
L. Beilina and C. Johnson \textit{A posteriori error estimation in
computational inverse scattering}, Math. Models Methods Appl. Sci.,
15 (2005), 23-35.

\bibitem{beilina2}
L. Beilina and M. V. Klibanov, \textit{A posteriori error estimates
for the adaptivity technique for the Tikhonov functional and global
convergence for a coefficient inverse problem}, Inverse Problems, 26
(2010), 045012 (27pp).

\bibitem{beilina3}
L. Beilina and M. V. Klibanov, \textit{Reconstruction of dielectrics
from experimental data via a hybrid globally convergent/adaptive
algorithm}, Inverse Problems, 26 (2010), 125009 (30).

\bibitem{beilina4}
L. Beilina,  M. V. Klibanov and M. Y. Kokurin, \textit{Adaptivity
with relaxation for ill-posed problems and global convergence for a
coefficient inverse problem}, J. Math. Sci. 167 (2010), 279-325.

\bibitem{bdd}
P. Binev, W. Dahmen and R. DeVore, \textit{Adaptive finite element
methods with convergence rates}, Numer. Math., 97 (2004), 219-268.

\bibitem{ckns}
J. M. Cascon, C. Kreuzer, R. H. Nochetto and K. G. Siebert,
\textit{Quasi-optimal convergence rate for an adaptive finite
element method}, SIAM J. Numer. Anal., 46 (2008), 2524-2550.

\bibitem{ciarlet}
P. G. Ciarlet, \textit{Finite element methods for elliptic
problems}, North-Holland, Amsterdam, 1978.

\bibitem{cdn}
A. Cohen, R. DeVore and R. H. Nochetto, \textit{Convergence rates
for AFEM with $H^{-1}$ data}, Found. Comput. Math., published
online: June 29, 2012, DOI: 10.1007/s10208-012-9120-1.

\bibitem{dk}
E. Divo and J. S. Kapat, \textit{Multi-dimensional heat flux
reconstruction using narrow-band thermochromic liquid crystal
thermography}, Inverse Problems in Science and Engineering, 9
(2001), 537-559.

\bibitem{dor}
W. D\"{o}rfler, \textit{A convergent adaptive algorithm for
Poisson's equation}, SIAM J. Numer. Anal., 33 (1996), 1106-1124.

\bibitem{fyl}
T. Feng, N. Yan and W. Liu, \textit{Adaptive finite element methods
for the identification of distributed parameters in elliptic
equation}, Adv. Comput. Math., 29 (2008), 27-53.

\bibitem{ghik}
A. Gaevskaya, R. H. W. Hoppe, Y. Iliash and M. Kieweg
\textit{Convergence analysis of an adaptive finite element method
for distributed control problems with control constraints}, Proc.
Conf. Optimal Control for PDEs, Oberwolfach, Germany (G. Leugering
et al.; eds.), Birkh\"{a}user, Basel, 2007.

\bibitem{gkv}
A. Griesbaum, B. Kaltenbacher and B. Vexler, \textit{Efficient
computation of the Tikhonov regularization parameter by
goal-oriented adaptive discretization}, Inverse Problems, 24 (2008),
025025 (20pp).

\bibitem{hh}
M. Hinterm\"{u}ller, R. H. W. Hoppe, \textit{Goal-oriented
adaptivity in pointwise state constrained optimal control of partial
differential equations}, SIAM J. Control Optim., 48 (2010),
5468-5487.

\bibitem{hhik}
M. Hinterm\"{u}ller, R. H. Hoppe, Y. Iliash and M. Kieweg,
\textit{An a posteriori error analysis of adaptive finite element
methods for distributed elliptic control problems with control
constraints}, ESAIM, Control Optim. Calc. Var., 14 (2008), 540-560.

\bibitem{lxz}
J. Li, J. Xie and J. Zou, \textit{An adaptive finite element
reconstruction of distributed fluxes}, Inverse Problems, 27 (2011),
075009 (25pp).

\bibitem{llmt}
R. Li, W. Liu, H. Ma and T. Tang, \textit{Adaptive finite element
approximation for distributed elliptic optimal control problems}
SIAM J. Control Optim., 41 (2002), 1321-1349.

\bibitem{liuyan}
W. Liu and N. Yan, \textit{A posteriori error estimates for
distributed convex optimal control problems}, Adv. Comput. Math., 15
(2001), 285-309.

\bibitem{lions}
J. L. Lions, \textit{Optimal Control of Systems Governed by Partial
Differential Equations}, Springer, Berlin, 1971.

\bibitem{koss}
I. Kossaczky. \textit{A recursive approach to local mesh refinement
in two and three dimensions}, J. Comp. Appl. Math., 55 (1995),
275-288.

\bibitem{maubach}
J. M. Maubach, \textit{Local bisection refinement for n-simplicial
grids generated by reflection}. SIAM J. Sci. Comput., 16 (1995),
210-227.

\bibitem{mitc}
W. F. Mitchell, \textit{A comparison of adaptive refinement
technieques for elliptic problems}, ACM Trans. Math. Software, 15
(1989), 326-347.

\bibitem{mor2}
P. Morin, R. H. Nochetto and K. G. Siebert, \textit{Convergence of
adaptive finite element methods}, SIAM Rev., 44 (2002), 631-658.

\bibitem{mor3}
P. Morin, K. G. Siebert and A. Veeser, \textit{A basic convergence
result for conforming adaptive finite elements}, Math. Models
Methods Appl. Sci., 18 (2008), 707-737.

\bibitem{nsv}
R. H. Nochetto, K. G. Siebert and A. Veeser, \textit{Theory of
adaptive finite element methods: an introduction}, Multiscale,
Nonlinear and Adaptive Approximation (R. A. DeVore and A. Kunoth,
Eds), Springer, New York, 2009, 409-542.

\bibitem{sz}
L. R. Scott and S. Zhang, \textit{Finite element interpolation of
nonsmooth functions satisfying boundary conditions}, Math. Comp., 54
(1990), 483-493.

\bibitem{siebert}
K. G. Siebert, \textit{A convergence proof for adaptive finite
elements without lower bounds}, IMA J. Numer. Anal., 31 (2011),
947-970.

\bibitem{stev}
R. Stevenson, \textit{Optimality of a standard adaptive finite
element method}, Found. Comput. Math., 7 (2007), 245-269.

\bibitem{stev1}
R. Stevenson, \textit{The completion of locally refined simplicial
partitions created by bisection}, Math. Comp., 77 (2008), 227-241.

\bibitem{traxler}
C. Traxler, \textit{An algorithm for adaptive mesh refinement in $n$
dimensions}, Computing, 59 (1997), 115¨C137.

\bibitem{xiezou}
J. Xie and J. Zou,\textit{Numerical reconstruction of heat fluxes},
SIAM J. Numer. Anal., 43 (2005), 1504-1535.

\bibitem{ver}
R. Verf\"{u}rth, \textit{A Review of A Posteriori Estimation and
Adaptive Mesh-Refinement Techniques}, Wiley-Teubner, Chichester, New
York, Stuttgart, 1996.

\bibitem{zk}
N. Zabaras and S. Kang, \textit{On the solution of an ill-posed
inverse design solidification problem using minimization techniques
in finite and infinite dimensional spaces}, Int. J. Numer. Methods
Eng., 36 (1993), 3973-3990.

\bibitem{zl}
N. Zabaras and J. Liu, \textit{An analysis of two-dimensional linear
inverse heat transfer problems using an integral method}, Numer.
Heat Transfer, 13 (1988), 527-533.


\end{thebibliography}
\end{document}